\let\origemptyset\emptyset
\let\emptyset\origemptyset
\newtheorem{convention}[theorem]{Convention}
\newcommand{\IR}{\mathbb{R}}
\newcommand{\IN}{\mathbb{N}}
\newcommand{\prob}{\mathbb{P}}
\newcommand{\expect}{\mathbb{E}}
\newcommand{\sigalg}{\mathcal{F}}
\newcommand{\NE}{\mathrm{NE}}
\newcommand{\rev}{\mathrm{rev}}
\newcommand{\sgt}{\succ} 
\newcommand{\sle}{\preccurlyeq} 
\newcommand{\sge}{\succcurlyeq} 
\def\moverlay{\mathpalette\mov@rlay}
\def\mov@rlay#1#2{\leavevmode\vtop{%
   \baselineskip\z@skip \lineskiplimit-\maxdimen
   \ialign{\hfil$\m@th#1##$\hfil\cr#2\crcr}}}
\newcommand{\charfusion}[3][\mathord]{
    #1{\ifx#1\mathop\vphantom{#2}\fi
        \mathpalette\mov@rlay{#2\cr#3}
      }
    \ifx#1\mathop\expandafter\displaylimits\fi}
\newcommand{\bigcupdot}{\charfusion[\mathop]{\bigcup}{\cdot}}
\begin{document}




\section{Introduction}

Let us seat $n\ge 1$ passengers independently and uniformly on $k\ge 1$ buses, and let $p_{n,k}$ denote the probability that at least one passenger travels alone. We prove that $p_{n,k+1}>p_{n,k}$ for each $n>1$ and $k$. More generally, if $L^{(k)}_n$ is the number of lonely passengers in the case of $k$ buses and $n$ passengers, then $L^{(k+1)}_n$ stochastically dominates $L^{(k)}_n$.

This problem could also be naturally formulated in term of balls and bins: We place $n$ balls into $k$ bins uniformly and independently. Show that the probability that there is at least one bin containing exactly one ball is increasing in $k$.

However, I will stick to the language of passengers on buses, mainly for historical reasons. This is how the problem was first formulated, and this is how it was discussed by people working on it for several months. I also find that for this problem, this language is at least as intuitive as that of balls and bins.

The motivation, apart from the naturality of the question, is a problem from László Márton Tóth in probabilistic graph theory. That problem, still unsolved, is presented briefly in Section~\ref{sec:motivation}. The lonely passenger problem was designed to feature a key difficulty of that problem in the cleanest possible form.

Some of the results of this paper can be formulated in terms of Stirling numbers of the second kind, as pointed out to me by Ed Crane, and later by Péter Csikvári. This is discussed briefly in Section~\ref{sec:Stirling}.

The problem, first formulated in September 2023, turned out to be surprisingly difficult, especially in contrast with the simplicity and intuitiveness of the statement. It was discussed by quite a few experts of probability for about half a year, with many wrong solutions born. Conditional probabilities are tricky, and intuition is often misleading. This is one of the reasons that the proof here is presented with many seemingly obvious details written out, to be closer to the safe side.

The basic idea of the proof is to look at the systems with $k$ and $k+1$ buses as passengers arrive one by one, and couple the two processes. This idea comes (to me) from Balázs Ráth. 

\subsection{Statement of the result}
\label{sec:statement}

Let the $n$ passengers of the $k$ buses arrive one by one. Formally, for $m=1,2,\dots,n$ let $B^{(k)}_m$ be independent and uniformly distributed on $\{1,2,\dots,k\}$ with $B^{(k)}_m$ denoting the number of the bus (out of $k$) taken by passenger number $m$. 

Let $L^{(k)}_n$ denote the number of lonely passengers, which is clearly equal to the number of buses with exactly one passenger:
\begin{align}
 L^{(k)}_n & :=\#\left\{u\in\{1,2,\dots,n\}\,\middle|\, B^{(k)}_u \neq B^{(k)}_v \text{ if }u\neq v\in\{1,2,\dots,n\}\right\} \\
 & = \#\left\{l\in\{1,2,\dots,k\}\,\middle|\, \exists! u\in\{1,2,\dots,n\right\} \text{ such that } B^{(k)}_u=l\ \}.
\end{align}
So $p_{n,k}:=\prob(L^{(k)}_n>0)$ is the probability that at least one passenger travels alone.

We use $\sge$ and $\sgt$ to denote stochastic dominance between (real valued) random variables: $X\sge Y$ if $\prob(X\ge u) \ge \prob(Y\ge u)$ for every $u\in\IR$. $X\sgt Y$ if $X\sge Y$ but $X$ and $Y$ are not identically distributed.

\begin{theorem}\label{thm:L_k_n_dominance}
 For every $n>1$ and $k\ge 1$, $L^{(k+1)}_n \sgt L^{(k)}_n$. Also $p_{n,k+1}>p_{n,k}$.
\end{theorem}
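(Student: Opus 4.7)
The plan is to follow the approach flagged at the end of the introduction and couple the two processes on a common probability space, letting passengers arrive one by one. A natural first attempt takes i.i.d. uniform assignments $B_m^{(k+1)} \in \{1,\ldots,k+1\}$ for the $(k+1)$-system and sets $B_m^{(k)} := B_m^{(k+1)}$ whenever $B_m^{(k+1)} \le k$, re-sampling $B_m^{(k)}$ independently and uniformly on $\{1,\ldots,k\}$ otherwise. Both processes then have the correct marginal laws, and they ``disagree'' precisely on the passengers routed to the extra bus in the $(k+1)$-system.

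With this coupling in hand, the task reduces to showing $\prob(L_n^{(k+1)} \ge t) \ge \prob(L_n^{(k)} \ge t)$ for every $t \ge 1$. A pathwise inequality cannot hold---a single realisation may easily produce more lonely passengers in the $k$-system---so the comparison has to be made in distribution, typically by conditioning on the size $a$ of the set $A_n := \{m \le n : B_m^{(k+1)} = k+1\}$. The cases $a = 0$ and $a = 1$ are essentially immediate; the case $a \ge 2$ requires comparing $L_{n-a}^{(k)}$ (read off the first $k$ buses of the $(k+1)$-system) against $L_n^{(k)}$ (the $k$-system with $a$ additional rerouted passengers), and it is not a priori clear whether inserting $a$ extra passengers into a $k$-bus configuration typically increases or decreases the lonely count.

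The main obstacle is exactly this comparison, and I expect this is where the reverse Markov chain flagged in the paper's keywords enters. By exchangeability of passengers, the conditional law of the occupancy chain read backwards from its final state removes a uniformly chosen passenger per step (i.e.\ decrements bus $j$ with probability proportional to its current occupancy). A reverse-time coupling allows one to start from the final $n$-passenger configuration in the $(k+1)$-system, simultaneously construct the $k$-system by coupled removals, and transport the dominance along the joint backward trajectory---reducing the theorem to an inductive statement on $n$ (or on $n-a$) with an invariant strong enough to be preserved under coupled addition/removal of a single passenger. Strictness, and hence the special case $p_{n,k+1}>p_{n,k}$ corresponding to $t=1$ in Theorem~\ref{thm:L_k_n_dominance}, would then follow by exhibiting an event of positive probability on which the inequality is strict---most naturally $\{|A_n|=1\}$, where the extra bus contributes a lonely passenger that no rearrangement in the $k$-system is forced to match.
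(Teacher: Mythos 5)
Your proposal correctly anticipates that a naive pathwise coupling fails and that time reversal must enter, but it stops exactly where the proof actually begins: the comparison in your case $a\ge 2$ \emph{is} the theorem, and you leave it as ``an inductive statement \dots with an invariant strong enough,'' without saying what the invariant is or how coupled removals preserve it. The decomposition you choose --- conditioning on the number of passengers sent to the extra bus --- is also not the one that closes the argument. The paper instead conditions on the number $l$ of \emph{nonempty} buses: given $N^{(k)}_n=l$, the configuration is (for permutation-invariant functionals) distributed exactly as the $l$-bus model conditioned to have no empty bus, so the problem reduces to comparing the models with $l$ and $l-1$ mandatory nonempty buses, combined with the easy fact that $N^{(k)}_n$ is stochastically increasing in $k$. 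Within that reduced problem the reversed occupied-bus-count chain is a pure death process whose downward transition probability from state $i$ at time $m$ equals the probability that a fixed passenger travels alone in the $i$-bus, $m$-passenger no-empty-bus model; proving that this is monotone in $i$ (Lemma~\ref{lem:first_passenger_lonely}, equivalent to a non-obvious inequality for Stirling numbers of the second kind) is the genuinely hard step, and nothing in your sketch supplies it. Only with that in hand does one obtain the strong coupling in which the two occupancy processes differ by a single monotone jump, on top of which the lonely-passenger count processes can finally be coupled.

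Your strictness argument is also broken: you concede that under your coupling the pathwise inequality $L^{(k+1)}_n\ge L^{(k)}_n$ fails with positive probability, so exhibiting one event (such as $\{|A_n|=1\}$) on which the $(k+1)$-system wins does not give $\prob(L^{(k+1)}_n\ge t)>\prob(L^{(k)}_n\ge t)$; the losses elsewhere could compensate. The paper sidesteps this with an arithmetic observation: any event in the $k$-bus model has probability a multiple of $k^{-n}$, any event in the $(k+1)$-bus model a multiple of $(k+1)^{-n}$, and since $k^n$ and $(k+1)^n$ are coprime these can coincide only at $0$ or $1$. Hence non-strict dominance plus non-triviality of the events already forces strict inequality, and that is the part of your programme worth replacing first.
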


(Clearly for $n=1$ we have $L^{(k)}_1=1$ and $p_{1,k}=1$, so $L^{(k+1)}_n \sge L^{(k)}_n$ and $p_{n,k+1}\ge p_{n,k}$ still hold.)

\subsection{Structure of the proof}

We first discuss versions of the model when all buses are required to be nonempty, or exactly $l$ buses are required to be nonempty, and the relation of these models to each other.

The basic step, repeated many times, is to look at the time evolution of different quantities in different models as the passengers arrive, and couple. This idea comes (to me) from Balázs Ráth.

The first thing we show is that the number of nonempty buses increases (stochastically) in $k$. This reduces the problem to the analogous problem with the condition that no bus (out of $l$) can be empty. This reduction is common knowledge among people who have been fighting with the bus problem, but I write out details to be sure.

The reduced problem, with no empty buses, is then discussed in several steps: 
\begin{enumerate}
 \item First we show that the number of nonempty buses at intermediate times $0<m<n$ also increases (stochastically) in $l$. This is done by looking at the time reversed versions of the Markov chains describing the evolution of nonempty buses in the different models. 
 \item Second, we use this to show that the probability of the first passenger travelling alone is increasing in $l$. I'm somewhat surprised that I couldn't find an easier / elementary proof of this statement, which is trivial in the original model. However, since then, Péter Csikvári gave an elegant (although still non-trivial) combinatorial proof -- see Section~\ref{sec:Stirling}.
 \item Third, we construct a very strong coupling of the nonempty bus count processes (as passengers arrive) in the $l$ and $l+1$ bus models, ensuring that the two processes do exactly the same apart from a single step when the big one grows and the small one doesn't. This is again done using the reverse Markov chains.
 \item Now the lonely passenger count processes can eventually be coupled.
\end{enumerate}

\section{Further models and notation}\label{sec:model}

\subsection{Given number of buses}

Sometimes it is convenient to look at a specific realization: Let $\Omega^{(k,n)}=\{1,2,\dots,k\}^n$, let $\prob_{k,n}$ be the uniform probability on $\Omega^{(k,n)}$. Let $B^{(k)}_1,\dots,B^{(k)}_n:\Omega^{(k,n)}\to\IN$ be given by $B^{(k)}_m(\underline{\omega}):=\omega_m$, or just say that $B^{(k,n)}_\bullet=(B^{(k)}_1,\dots,B^{(k)}_n):\Omega^{(k,n)}\to\IN^n$ is the identity function.

\begin{notation}
 For a vector $(a_1,a_2,\dots,a_n)\in\IR^n$ we use the notation $\underline{a}$ or $a_\bullet$ depending on the context -- whether we want to think of the sequence as a function of time or just as a vector.
 
 For vectors in $\IR^n$ we use $\ge$ and $\le$ to denote the usual partial ordering given by elementwise comparison.
\end{notation}

\begin{notation}[Stochastic dominance]
As mentioned in Section~\ref{sec:statement}, we use $\sge$ to denote (non-strict, first order) stochastic dominance between ($\IN$-valued) random variables: $X\sge Y$ if $\prob(X\ge i)\ge \prob(Y\ge i)$ for every $i\in\IN$. This is equivalent to the existence of a coupling $(\tilde{X},\tilde{Y})$ such that $\tilde{X}\sim X$, $\tilde{Y}\sim Y$ and $(\tilde{X}\ge\tilde{Y})$ almost surely.

With some abuse of notation we will also use $\sge$ to compare stochastic processes on $\IN$: we write $X_\bullet\sge Y_\bullet$ if there is a coupling $(\tilde{X}_\bullet,\tilde{Y}_\bullet)$ of the processes such that almost surely $\tilde{X}_m\ge \tilde{Y}_m$ for every $m$.
\end{notation}

For $m=0,1,\dots,n$ let $L^{(k)}_m$ denote the number of lonely passengers after the arrival of the $m$th passenger:
\begin{align}
 L^{(k)}_m & :=\#\left\{u\in\{1,2,\dots,m\}\,\middle|\, B^{(k)}_u \neq B^{(k)}_v \text{ if }u\neq v\in\{1,2,\dots,m\}\right\} \\
 & = \#\left\{l\in\{1,2,\dots,k\}\,\middle|\, \exists! u\in\{1,2,\dots,m\right\} \text{ such that } B^{(k)}_u=l\ \}.
\end{align}
(So $L^{(k)}_0=0$.)

For $m=0,1,\dots,n$ let $N^{(k)}_m$ denote he number of nonempty buses after the arrival of the $m$th passenger:
\begin{equation}\label{eq:N(k)_m_def}
 N^{(k)}_m=\#\left\{B^{(k)}_1,\dots,B^{(k)}_m\right\}.
\end{equation}
(So $N^{(k)}_0=\# \emptyset = 0$.)

\subsection{Strict versus non-strict inequalities}

The following lemma helps reduce the proofs of strict inequalities between probabilities to non-strict ones.

\begin{lemma}\label{lem:prob_not_equal}
Let $A\subset \Omega^{(k,n)}$ and $B\subset \Omega^{(k+1,n)}$ be any events. Then $\prob_{k,n}(A)\neq \prob_{k+1,n}(B)$ unless both of them are $0$ or $1$.
\end{lemma}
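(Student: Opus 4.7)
The plan is to exploit the fact that $\prob_{k,n}$ and $\prob_{k+1,n}$ are uniform on finite sets whose sizes, $k^n$ and $(k+1)^n$, are coprime because $\gcd(k,k+1)=1$.

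First I would write $\prob_{k,n}(A)=|A|/k^n$ and $\prob_{k+1,n}(B)=|B|/(k+1)^n$, and suppose for contradiction that these are equal. Clearing denominators gives
\begin{equation*}
 |A|\,(k+1)^n \;=\; |B|\,k^n.
\end{equation*}
Since $\gcd(k,k+1)=1$, also $\gcd(k^n,(k+1)^n)=1$, so by Euclid's lemma $k^n \mid |A|$ and $(k+1)^n \mid |B|$.

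Combining this with the obvious bounds $0\le|A|\le k^n$ and $0\le|B|\le(k+1)^n$, the only possibilities are $|A|\in\{0,k^n\}$ and $|B|\in\{0,(k+1)^n\}$. Because the shared value of the two probabilities must be the same, the two cases align: either both numerators are $0$ (so both probabilities equal $0$) or both numerators are maximal (so both probabilities equal $1$). This is exactly the stated exception, so there is no real obstacle; the argument is a short number-theoretic observation, and the only thing to be careful about is ensuring the alignment of the two ``$0$ or maximal'' cases, which is automatic once we demand equality of the two fractions.
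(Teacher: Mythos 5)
Your argument is correct and is essentially identical to the paper's proof: both clear denominators to get $|A|(k+1)^n=|B|k^n$, invoke the coprimality of $k^n$ and $(k+1)^n$ to conclude that the common value must be an integer, hence $0$ or $1$. No further comment is needed.
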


\begin{proof}
 $p_A:=\prob_{k,n}(A)=\frac{|A|}{k^n}$ is a multiple of $\frac{1}{k^n}$, while 
 $p_B:=\prob_{k+1,n}(B)=\frac{|B|}{(k+1)^n}$ is a multiple of $\frac{1}{(k+1)^n}$. Such probabilities can hardly be equal:
 If $p_A=p_B$, then $|A|(k+1)^n=|B| k^n$, so $|A| (k+1)^n$ is a multiple of $k^n$. But $k^n$ and $(k+1)^n$ are relatively prime, so $|A|$ must also be a multiple of $k^n$, meaning that $p_A=p_B$ is an integer.
\end{proof}

Keeping this lemma in mind, we only aim at the non-strict version of Theorem~\ref{thm:L_k_n_dominance}, and work with non-strict inequalities throughout the paper, which saves us from worries about special cases. Then the strict version follows automatically, since the events in question are non-trivial.

\subsection{No empty buses}
\label{sec:noempty}

Assume we have $k$ buses and $n\ge k$ passengers. Then we can restrict to the configurations when there is no empty bus. Let $\Omega^{(k,n,\NE)}\subset \Omega^{(k,n)}$ be the set of such configurations (NE stands for ``no empty''), which can be written in several convenient forms:
\begin{align}
 \Omega^{(k,n,\NE)} & := \{N^{(k)}_n=k\}=\{\{B^{(k)}_1,\dots,B^{(k)}_n\}=\{1,2,\dots,k\}\} \\ 
 & = \{\underline{\omega}\in\Omega^{(k,n)}\,|\,\{\omega_1,\dots,\omega_n\}=\{1,2,\dots,k\}\}.
\end{align}
Let $\prob^\NE_{k,n}$ be the uniform probability measure on $\Omega^{k,n,\NE}$. Since $\Omega^{(k,n,\NE)}\subset \Omega^{(k,n)}$, all events and random variables defined on the probability space $(\Omega^{(k,n)},\prob_{k,n})$ make sense on $\left(\Omega^{(k,n,\NE)},\prob^\NE_{k,n}\right)$ as well, but the probabilities and distributions are of course different:
\begin{equation}\label{eq:prob-NE-A}
 \prob^\NE_{k,n} (A) = \prob_{k,n}\left(A\,\middle| \,\Omega^{(k,n,\NE)}\right)
\end{equation}
for every $A\subset \Omega^{(k,n)}$ and
\begin{equation}\label{eq:prob-NE-Xx}
 \prob^\NE_{k,n} (X=x) = \prob_{k,n}\left(X=x\,\middle| \,\Omega^{(k,n,\NE)}\right)
\end{equation}
for any function $X$ on $\Omega^{(k,n,\NE)}$ (into any set $S$) and any value $x\in S$, since $\prob^\NE_{k,n}$ is exactly $\prob_{k,n}$ conditioned on $\Omega^{(k,n,\NE)}$.
\begin{remark}
 \begin{enumerate}
  \item These spaces are finite, so we always define the probabilities on the discrete $\sigma$-algebras, which we omit in the notation. Also, all random variables into any set are discrete, so the probabilities of particular values give full information on the distribution.
  \item There is a slight abuse of notation in (\ref{eq:prob-NE-A}): $\prob^\NE_{k,n} (A)$ should formally be $\prob^\NE_{k,n} (A\cap \Omega^{(k,n,\NE)})$, but this hopefully causes no confusion. On the other hand, (\ref{eq:prob-NE-Xx}) is formally correct with $\{X=x\}$ denoting different events (on different spaces) on the two sides of the equality.
  \item The probability $\prob^\NE_{k,n}$ is uniform, but its domain $\Omega^{(k,n,\NE)}$ is more complicated than $\Omega^{(k,n)}$ (e.g. there is no closed formula for the number of elements), so probabilities are often more tricky. In particular, $(B^{(k)}_1,\dots,B^{(k)}_n)$ are clearly no longer independent under $\prob^\NE_{k,n}$.
 \end{enumerate}
\end{remark}

\subsection{Fixed number of nonempty buses}

If there are $k$ buses and $n$ passengers, $1\le l\le n, k$ and we condition on the event that there are exactly $l$ nonempty buses (that is, on $\{N^{(k)}_n=l\}$), that is essentially the same as if there are $l$ buses and we condition on all of them being nonempty (that is, using the space $(\Omega^{(l,n,\NE)},\prob^\NE_{l,n})$). In some sense, if there are exactly $l$ nonempty buses, then we can assume, without loss of generality, that these nonempty buses are the first $l$ -- as long as the questions we ask are not sensitive to the numbering of the buses. This is made rigorous in the following lemma.

\begin{lemma}\label{lem:conditioning_on_N_equals_l}
 Let $1\le l \le n, k$ and let $S$ be any nonempty set.
 Assume that the function $f: \{1,2,\dots,k\}^n\to S$ is invariant under permutations:
 \begin{equation}
  f(a_1,a_2,\dots,a_n)=f(\pi(a_1),\pi(a_2),\dots,\pi(a_n))
 \end{equation}
 for any permutation $\pi:\{1,2,\dots,k\} \righttoleftarrow$.
 Then for any $s\in S$
 \begin{equation}
  \prob_{k,n} \left( f\left(B^{(k)}_1,\dots,B^{(k)}_n\right)=s \,\middle|\, N^{(k)}_n=l \right) = \prob^\NE_{l,n}\left(f\left(B^{(k)}_1,\dots,B^{(k)}_n\right)=s\right).
 \end{equation}
\end{lemma}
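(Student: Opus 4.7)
The plan is to reduce the conditional probability on the left to a counting problem and exploit the permutation invariance of $f$ to reshuffle $\{1,\dots,k\}$ so that the $l$ nonempty buses can always be taken to be $\{1,\dots,l\}$. First I would partition the conditioning event according to which subset of buses is nonempty: for each $T\subseteq\{1,\dots,k\}$ with $|T|=l$ set
\[
 \Omega_T:=\{\omega\in\Omega^{(k,n)}\,:\,\{\omega_1,\dots,\omega_n\}=T\}.
\]
Then $\{N^{(k)}_n=l\}=\bigsqcup_{|T|=l}\Omega_T$, and under the natural embedding $\{1,\dots,l\}^n\hookrightarrow\{1,\dots,k\}^n$ one has $\Omega_{\{1,\dots,l\}}=\Omega^{(l,n,\NE)}$, which lets $f$ be evaluated on the latter.

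Next, for each $T$ with $|T|=l$ I would fix a permutation $\pi_T$ of $\{1,\dots,k\}$ sending $\{1,\dots,l\}$ bijectively onto $T$. Its coordinatewise action $\omega\mapsto(\pi_T(\omega_1),\dots,\pi_T(\omega_n))$ is a bijection $\Omega^{(l,n,\NE)}\to\Omega_T$, and by the assumed permutation invariance of $f$ this bijection preserves the value of $f$. Consequently
\[
 |\{\omega\in\Omega_T:f(\omega)=s\}|=|\{\omega\in\Omega^{(l,n,\NE)}:f(\omega)=s\}|
\]
for every such $T$ and every $s\in S$, and in particular $|\Omega_T|=|\Omega^{(l,n,\NE)}|$.

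Summing over the $\binom{k}{l}$ subsets $T$ and using that $\prob_{k,n}$ is uniform on $\Omega^{(k,n)}$, the numerator and denominator of the conditional probability both pick up the same factor $\binom{k}{l}/k^n$, which cancels to give
\[
 \prob_{k,n}(f=s\mid N^{(k)}_n=l)=\frac{|\{\omega\in\Omega^{(l,n,\NE)}:f(\omega)=s\}|}{|\Omega^{(l,n,\NE)}|}=\prob^\NE_{l,n}(f=s),
\]
which is exactly the claim. The denominator $\prob_{k,n}(N^{(k)}_n=l)$ is strictly positive under the hypothesis $1\le l\le n,k$, since $\Omega^{(l,n,\NE)}$ itself is nonempty for such $l$ (any surjection from $n\ge l$ passengers onto $l$ buses produces an element), so the conditional probability is well-defined.

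I do not anticipate any real obstacle: the statement is essentially symmetry plus bookkeeping. The only point to be careful with is the slight notational sleight of hand by which $\Omega^{(l,n,\NE)}\subset\{1,\dots,l\}^n$ is viewed as a subset of $\{1,\dots,k\}^n$ so that the given $f$ can be evaluated on it, and the consistent use of permutation invariance to identify the $\binom{k}{l}$ blocks $\Omega_T$ with one another (both as sets and with respect to the value of $f$).
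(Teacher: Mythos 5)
Your argument is correct and is essentially the paper's own proof: both partition the event $\{N^{(k)}_n=l\}$ according to which $l$-element subset of buses is occupied and use a permutation of $\{1,\dots,k\}$ to carry each block onto the canonical block $\Omega^{(l,n,\NE)}$, exploiting the invariance of $f$ together with the uniformity of $\prob_{k,n}$. You phrase the symmetry as an $f$-preserving bijection between blocks and count, while the paper phrases it as equality of conditional probabilities given $O_{k,n}=U$ followed by the law of total probability, but these are the same argument.
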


\begin{proof}
  If we know that exactly $l$ out of $k$ buses are nonempty, it doesn't matter which $l$ those are. Let
  \begin{equation}
   O_{k,n}=\left\{B^{(k)}_1,\dots,B^{(k)}_n\right\}
  \end{equation}
  be the (random) set of nonempty buses.
  So if $U\subset\{1,2,\dots,k\}$ and $|U|=l$, then take any permutation $\pi$ on  $\{1,2,\dots,k\}$ such that $\pi(U)=\{1,2\dots,l\}$, and use the permutation symmetry of $f$ to get
 \begin{align}
 & \prob_{k,n}\left(f\left(B^{(k)}_1,\dots,B^{(k)}_n\right)=s \,\middle|\, O_{k,n}=U\right) \\
 & =  
 \prob_{k,n}\left(f\left(\pi(B^{(k)}_1),\dots,\pi(B^{(k)}_n)\right)=s \,\middle|\, \pi(O_{k,n})=\pi(U)\right) \\
 & =  
 \prob_{k,n}\left(f\left(\pi(B^{(k)}_1),\dots,\pi(B^{(k)}_n)\right)=s \,\middle|\, \left\{\pi(B^{(k)}_1),\dots,\pi(B^{(k)}_n)\right\}=\{1,2,\dots,l\}\right).
 \end{align}
 But $\left(B^{(k)}_1,\dots,B^{(k)}_n\right)$ and $\left(\pi(B^{(k)}_1),\dots,\pi(B^{(k)}_n)\right)$ have the same joint distribution (under $\prob_{n,k}$), so 
  \begin{align}
 & \prob_{k,n}\left(f\left(\pi(B^{(k)}_1),\dots,\pi(B^{(k)}_n)\right)=s \,\middle|\, \left\{\pi(B^{(k)}_1),\dots,\pi(B^{(k)}_n)\right\}=\{1,2,\dots,l\}\right) \\
 & = 
 \prob_{k,n}\left(f\left(B^{(k)}_1,\dots,B^{(k)}_n\right)=s \,\middle|\, 
 \left\{B^{(k)}_1,\dots,B^{(k)}_n\right\} =\{1,2,\dots,l\}\right),
  \end{align}
 giving
 \begin{equation}\label{eq:prob_f_s_symmetric}
 \prob_{k,n}\left(f\left(B^{(k)}_\bullet\right)=s \,\middle|\, O_{k,n}=U\right) =  
 \prob_{k,n}\left(f\left(B^{(k)}_\bullet\right)=s \,\middle|\, O_{k,n}=\{1,2,\dots,l\}\right).
 \end{equation}
 In turn, conditioning on the first $l$ buses being nonempty is exactly the same as having $l$ buses with none of them empty:
 \begin{equation}
  \Omega^{(l,n,\NE)} = \left\{ O_{k,n}=\{1,2,\dots,l\} \right\} \subset \Omega^{(k,n)},
 \end{equation}
 $\prob_{k,n}$ conditioned on this set is exactly the uniform probability, so
 \begin{equation}\label{eq:prob_f_s_first_l}
  \prob_{k,n}\left(f\left(B^{(k)}_\bullet\right)=s \,\middle|\, O_{k,n}=\{1,2,\dots,l\}\right) = \prob^\NE_{l,n}\left(f\left(B^{(k)}_\bullet\right)=s\right).
 \end{equation}
 Putting (\ref{eq:prob_f_s_symmetric}) and (\ref{eq:prob_f_s_first_l}) together give
 \begin{equation}\label{eq:prob_f_s_O_U}
  \prob_{k,n}\left(f\left(B^{(k)}_\bullet\right)=s \,\middle|\, O_{k,n}=U\right) =
  \prob^\NE_{l,n}\left(f\left(B^{(k)}_\bullet\right)=s\right)
 \end{equation}
 for every $U\in H^{k,l}$, where
  \begin{equation}
  H^{k,l}:=\{U\subset\{1,2,\dots,k\}\,:\, |U|=l\},
 \end{equation}
 so
 \begin{equation}\label{eq:N_is_l_as_union}
  \left\{ N^{(k)}_n=l \right\} = \bigcupdot_{U\in H^{k,l}} \left\{ O_{k,n}=U \right\}.
 \end{equation}
 The statement of the lemma follows by additivity of the probability:
 \begin{align}
  \prob_{k,n} \left( f\left(B^{(k)}_\bullet\right)=s \,\middle|\, N^{(k)}_n=l \right)
  & = \frac{\prob_{k,n} \left( N^{(k)}_n=l, f\left(B^{(k)}_\bullet\right)=s \right)}{\prob_{k,n} \left(  N^{(k)}_n=l \right)} \\
  & = \frac{\sum_{U\in H^{k,l}} \prob_{k,n} (O_{k,n}=U) \prob_{k,n}\left(f\left(B^{(k)}_\bullet\right)=s \,\middle|\, O_{k,n}=U\right)}{\sum_{U\in H^{k,l}} \prob_{k,n} (O_{k,n}=U)} \\
  &= \prob^\NE_{l,n}\left(f\left(B^{(k)}_\bullet\right)=s\right)
 \end{align}
 by (\ref{eq:prob_f_s_O_U}), so the lemma is proven.
\end{proof}

\begin{remark}
 This lemma is very intuitive and quite trivial, but not as trivial as it may seem. The idea was that if we know that there are exactly $l$ nonempty buses, then we can assume, without loss of generality, that exactly buses $\{1,2,\dots,l\}$ are nonempty. But conditional probabilities are tricky. For example, one could naively think that if we know that there are \emph{at most} $l$ nonempty buses, then we can assume, without loss of generality, that \emph{at most} buses $\{1,2,\dots,l\}$ are nonempty, so e.g.
 \begin{align}
  & \prob_{k,n}(\text{first two passengers travel together}\,|\,\text{at most $l$ buses are nonempty}) \\
  & = \prob_{k,n}(\text{first two passengers travel together}\,|\,\text{at most buses $\{1,\dots,l\}$ are nonempty}).
 \end{align}
 This is not true: one can easily check for $l=2$, $n=k=3$ that the LHS is $\nicefrac{3}{7}$ and the RHS is $\nicefrac{1}{2}$.
 The proof of Lemma~\ref{lem:conditioning_on_N_equals_l} breaks down because the union in (\ref{eq:N_is_l_as_union}) is not disjoint.
\end{remark}

\begin{remark}
 The conditions on permutation symmetry in this lemma could be relaxed to cover, for example, the case when $f$ is the indicator of $\{B_1<B_2\}$.
\end{remark}

\begin{remark}
The lemma will be applied to functions $f:\{1,2,\dots,k\}^n\to S$ where $S$ is possibly huge: we will map the passenger configuration to entire stochastic processes. 
\end{remark}

\section{Preliminaries}

\subsection{Ordering of pure birth processes}

A Markov chain $X_0,\dots,X_n$ on $\IN$ is called a pure birth process if jumps can only be $0$ or $1$, so $0\le X_{m+1}-X_m\le 1$ almost surely for $m=0,1,\dots,(n-1)$. Similarly, $X_0,\dots,X_n$ is a pure death process if jumps can only be $0$ or $-1$. Note that $X_m$ can be time inhomogeneous. 

The following lemma says that if a pure birth process tends to grow faster than another, and it's initially bigger (stochastically), then it stays bigger all the time (stochastically).

\begin{lemma}~\label{lem:birth_process_dominance}
 Let $X_0,X_1,\dots,X_n$ and $Y_0,Y_1,\dots,X_n$ be possibly time inhomogeneous pure birth processes on $\IN$. Denote the birth probabilities by $r^{X,m}_i:=\prob(X_{m+1}=i+1\,|\, X_m=i)$ and $r^{Y,m}_i:=\prob(Y_{m+1}=i+1\,|\, Y_m=i)$. Assume that $r^{X,m}_i\ge r^{Y,m}_i$ for every $m=0,1,\dots,(n-1)$ and $i\in\IN$ (meaning that $X_m$ is more likely to grow than $Y_m$ whenever they are equal). Assume also that $X_0\sge Y_0$. Then $X_\bullet\sge Y_\bullet$.
\end{lemma}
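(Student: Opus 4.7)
The plan is to construct, by induction on $m$, a coupling $(\tilde{X}_\bullet,\tilde{Y}_\bullet)$ of the two pure birth processes in which $\tilde{X}_m \ge \tilde{Y}_m$ holds almost surely for every $m$. This is the standard monotone coupling argument, and the only thing to check is that it is compatible with the pure birth structure and with the pointwise domination $r^{X,m}_i \ge r^{Y,m}_i$.

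For the base step I would use the hypothesis $X_0 \sge Y_0$ to pick a coupling $(\tilde{X}_0,\tilde{Y}_0)$ with $\tilde{X}_0 \ge \tilde{Y}_0$ almost surely. For the induction step, assume the pair $(\tilde{X}_m,\tilde{Y}_m) = (x,y)$ with $x \ge y$ is given. I distinguish two cases.

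If $x > y$, so $x \ge y+1$, I would let $\tilde{X}_{m+1}$ and $\tilde{Y}_{m+1}$ evolve conditionally independently, according to the correct one-step transition laws. Since the processes are pure birth, $\tilde{X}_{m+1}\ge x \ge y+1 \ge \tilde{Y}_{m+1}$, so the domination is preserved automatically. If $x=y$, I would use a common uniform $U$ on $[0,1]$ (independent of the past) and set $\tilde{X}_{m+1}=x+1$ iff $U<r^{X,m}_x$ and $\tilde{Y}_{m+1}=y+1$ iff $U<r^{Y,m}_y$. Because $r^{X,m}_x \ge r^{Y,m}_y = r^{Y,m}_x$ by hypothesis, whenever $\tilde{Y}$ takes a birth step so does $\tilde{X}$, hence $\tilde{X}_{m+1}\ge \tilde{Y}_{m+1}$.

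A direct check shows that in both cases the conditional law of $\tilde{X}_{m+1}$ given $\tilde{X}_m = x$ (integrating out $\tilde{Y}$ and any extra randomness) is exactly the correct birth law $r^{X,m}_x$, and symmetrically for $\tilde{Y}$. So the marginals of $\tilde{X}_\bullet$ and $\tilde{Y}_\bullet$ are the given processes, and by construction $\tilde{X}_m\ge\tilde{Y}_m$ for all $m$, which is the definition of $X_\bullet\sge Y_\bullet$ used in the paper. I do not expect a real obstacle here; the only thing that could go wrong conceptually is the case $x>y$, but the pure birth assumption (jumps of size at most $+1$) makes that step trivial, which is exactly why the pure birth hypothesis is essential.
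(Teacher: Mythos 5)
Your proposal is correct and is essentially the paper's own argument, which constructs the same monotone coupling (common uniform when the chains are equal, using $r^{X,m}_i\ge r^{Y,m}_i$; the pure birth property handling the case of strict inequality) and concludes by induction. You have simply written out the details that the paper leaves implicit.
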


\begin{proof}
 The processes can be coupled by constructing a Markov chain $(X_m,Y_m)\in\IN^2$ such that $X_0\ge Y_0$ and, whenever $X_m=Y_m$ and $Y_m$ grows, then so does $X_m$.
%
%
 Then $X_m\ge Y_m$ for all $m$ by induction. 
\end{proof}

Obvious couplings of pure birth and pure death processes similar to this one will be used several times in the paper, without writing them out as lemmas with separate proofs.

\subsection{Stochastic monotonicity of the number of nonempty buses}

Recall from (\ref{eq:N(k)_m_def}) that $N^{(k)}_m$ is the number of nonempty buses after the arrival of $m$ passengers. The following lemma says that $N^{(k)}_m$ is monotone increasing in $k$ in the sense of stochastic dominance.

\begin{lemma}\label{lem:nonempty_bus_dominance_1}
 $N^{(k+1)}_\bullet \sge N^{(k)}_\bullet$ for every $k=1,2,\dots$. In particular, $N^{(k+1)}_m \sge N^{(k)}_m$ for every $m$ and $k$.
\end{lemma}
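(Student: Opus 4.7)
The plan is to recognize each $N^{(k)}_\bullet$ as a pure birth Markov chain, compute its birth probabilities, and then invoke Lemma~\ref{lem:birth_process_dominance}. For Markovianness: given $N^{(k)}_m = i$, it is irrelevant \emph{which} $i$ of the $k$ buses are the occupied ones, because passenger $m+1$ picks a bus uniformly from $\{1,\dots,k\}$ independently of the past. So the conditional probability that $N^{(k)}_{m+1} = i+1$ (i.e.\ that the new bus is one of the $k-i$ empty ones) equals $(k-i)/k$, regardless of the rest of the history, and otherwise $N^{(k)}_\bullet$ stays at $i$. Hence $N^{(k)}_\bullet$ is a (time-homogeneous) pure birth process on $\IN$ with birth probabilities $r^{(k),m}_i = 1 - i/k$.

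Next I would compare the two chains. For every $i\in\IN$ and every $m$,
\[
r^{(k+1),m}_i - r^{(k),m}_i \;=\; \frac{i}{k} - \frac{i}{k+1} \;=\; \frac{i}{k(k+1)} \;\ge\; 0,
\]
so whenever the two processes occupy the same state the $(k{+}1)$-bus chain is at least as eager to grow as the $k$-bus chain. Moreover $N^{(k+1)}_0 = N^{(k)}_0 = 0$ deterministically, so the initial condition $N^{(k+1)}_0 \sge N^{(k)}_0$ is trivial. Both hypotheses of Lemma~\ref{lem:birth_process_dominance} are therefore satisfied, yielding $N^{(k+1)}_\bullet \sge N^{(k)}_\bullet$; the ``in particular'' clause is immediate from the definition of stochastic dominance of processes (project the process-level coupling to a single time $m$).

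I do not anticipate a real obstacle here. The only point that deserves to be stated explicitly is that the one-step transition probability of $N^{(k)}_m$ depends only on its current value and not on \emph{which} subset of buses is currently nonempty, which is immediate from the fact that $B^{(k)}_{m+1}$ is uniform on $\{1,\dots,k\}$ and independent of the past. Once that is noted, everything reduces to the arithmetic comparison $1 - i/(k+1) \ge 1 - i/k$ and a direct appeal to the preceding lemma.
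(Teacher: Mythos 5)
Your argument is correct and is essentially identical to the paper's proof: both identify $N^{(k)}_\bullet$ as a time-homogeneous pure birth process with birth probabilities $r^{(k)}_i = 1-\frac{i}{k}$, observe $r^{(k+1)}_i \ge r^{(k)}_i$, and conclude via Lemma~\ref{lem:birth_process_dominance}. Your extra remark justifying the Markov property (that the transition probability depends only on the count of nonempty buses, not on which ones) is a welcome detail the paper leaves implicit.
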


\begin{proof}
 $N^{(k)}$ is a time homogeneous pure birth process with birth probabilities
 \begin{equation}
 r^{(k)}_i:=\prob\left(N^{(k)}_{m+1}=i+1\,\middle|\, N^{(k)}_m=i\right)=1-\frac{i}{k}
 \end{equation}
for $i=0,1,\dots,k$. (The states $i>k$ are never reached, so we can set anything, for example $r^{(k)}_i:=0$ for these.)  So $r^{(k+1)}_i\ge r^{(k)}_i$ for all $i\in\IN$, and Lemma~\ref{lem:birth_process_dominance} applies, giving exactly the statement we are proving.
\end{proof}
To my knowledge, this lemma was first proven by Márton Balázs.

\begin{remark}
 The argument of this proof can be fine tuned to construct a coupling in which $N^{(k)}_m \sle N^{(k+1)}_m \sle N^{(k)}_m+1$.
 \footnote{Equivalently: In this coupling the number of nonempty buses and the number of empty buses is simultaneously bigger in the $(k+1)$ bus system than in the $k$ bus system. (More precisely, one of them is equal and the other is bigger by $1$.)}
 Such a coupling gives fairly good comparison of the evolutions of the number of lonely passengers in the two systems, but it's just not good enough to get Theorem~\ref{thm:L_k_n_dominance} -- at least I could not do it. Instead, we will couple the evolutions of the number of nonempty buses in systems where no bus can remain empty in the end. In these models an even better coupling is possible.
\end{remark}

\subsection{Conditional Markov chains}
\label{sec:conditional_MC}

What is written in this and the next section is well known from the classical theory of Markov chains. We repeat it here to allow easy referencing, and to emphasize important details.

Let $X_0,X_1,\dots,X_n$ be a Markov chain on the finite or countable state space $S$ and let $A$ be an event depending on $X_n$ only. Then the process $X_\bullet$ conditioned on $A$ is also a Markov chain. Formally:

\begin{lemma}\label{lem:MC_conditioning}
Let $(\Omega,\sigalg,\prob)$ be a probability space, let $X_0,X_1,\dots,X_n:\Omega\to S$ be a Markov chain. Let $\Omega\supset A\in\sigma(X_n)$ and define $\prob_A:\sigalg\to [0,1]$ as $\prob_A(B):=\prob(B\,|\, A)$ for every $B\in\sigalg$. Then $X_0,X_1,\dots,X_n$ is also a Markov chain w.r.t. $\prob_A$, meaning
\begin{equation}
 \prob_A(X_{m+1}=j \, | \, X_0=i_0, X_1=i_1,\dots, X_m=i_m) = \prob_A(X_{m+1}=j \, | X_m=i_m)
\end{equation}
for every $m=0,1,\dots,(n-1)$ and $i_0,i_1,\dots,i_m,j\in S$.
\end{lemma}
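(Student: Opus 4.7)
The plan is a direct computation via Bayes' rule combined with one application of the (unconditional) Markov property at time $m+1$. Since $A\in\sigma(X_n)$, I would first write $A=\{X_n\in C\}$ for some $C\subset S$. For any history $i_0,\dots,i_m\in S$ with $\prob_A(X_0=i_0,\dots,X_m=i_m)>0$ (so that the conditional probabilities below are well-defined), I would expand
\[\prob_A(X_{m+1}=j\mid X_0=i_0,\dots,X_m=i_m) = \frac{\prob(X_0=i_0,\dots,X_m=i_m,X_{m+1}=j,X_n\in C)}{\prob(X_0=i_0,\dots,X_m=i_m,X_n\in C)}.\]

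The key step is to factor each joint probability across the bridge at time $m+1$: by the Markov property (in the standard form $\prob(X_n\in C\mid X_0=i_0,\dots,X_{m+1}=j')=\prob(X_n\in C\mid X_{m+1}=j')$), we have for each $j'\in S$
\[\prob(X_0=i_0,\dots,X_m=i_m,X_{m+1}=j',X_n\in C) = \prob(X_0=i_0,\dots,X_m=i_m,X_{m+1}=j')\cdot \prob(X_n\in C\mid X_{m+1}=j').\]
Writing the denominator as the sum of such terms over $j'\in S$, dividing numerator and denominator by $\prob(X_0=i_0,\dots,X_m=i_m)$, and applying the Markov property once more to replace $\prob(X_{m+1}=j'\mid X_0=i_0,\dots,X_m=i_m)$ by $\prob(X_{m+1}=j'\mid X_m=i_m)$, the ratio reduces to
\[\frac{\prob(X_{m+1}=j\mid X_m=i_m)\,\prob(X_n\in C\mid X_{m+1}=j)}{\sum_{j'\in S}\prob(X_{m+1}=j'\mid X_m=i_m)\,\prob(X_n\in C\mid X_{m+1}=j')}.\]
This manifestly depends on the history only through $i_m$, which is already the Markov property under $\prob_A$. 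Running the same calculation with the conditioning event shrunk to $\{X_m=i_m\}$ gives the identical ratio for $\prob_A(X_{m+1}=j\mid X_m=i_m)$, so both sides of the claimed identity coincide.

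I do not anticipate a genuine obstacle; the only point worth flagging is that the assumption $A\in\sigma(X_n)$ is essential, since it is precisely what allows the joint probability of the past, of $\{X_{m+1}=j'\}$ and of $A$ to split cleanly into a past-part and a factor $\prob(A\mid X_{m+1}=j')$. If $A$ instead depended on both past and future values jointly (e.g.\ a hitting-time event straddling time $m+1$), this factorization would fail and conditioning on $A$ would generically destroy the Markov property.
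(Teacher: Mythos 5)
Your computation is correct, and it is exactly the ``easy calculation using only the definition of Markov chains and conditional probability'' that the paper alludes to without writing out: cancel $\prob(A)$ in the Bayes ratio, factor the bridge at time $m+1$ using $\prob(X_n\in C\mid X_0=i_0,\dots,X_{m+1}=j')=\prob(X_n\in C\mid X_{m+1}=j')$, and observe the resulting expression depends on the history only through $i_m$. Your closing remark about why $A\in\sigma(X_n)$ is essential is also consistent with the paper's emphasis that conditioning on more general events need not preserve the Markov property.
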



The proof is an easy calculation using only the definition of Markov chains and conditional probability. Of course, the conditional process will not be time homogeneous, even if the original process was (unless the condition is trivial). The conditional process is a special case of the Doob $h$-transform, see e.g. \cite{Swart} Proposition 1.6 and the remark following it.

\subsection{Time reversed Markov chains}
\label{sec:time_reverse}

If $X_0,X_1,\dots,X_{n-1},X_n\in S$ is a Markov chain, then so is $X_n,X_{n-1},\dots,X_1,X_0$. This is obvious from the characterization of Markov chains which requires that conditioned on the present state the past and the future are conditionally independent.

It's important to keep in mind that the reversed process is typically time inhomogeneous, even if the original was time homogeneous.
Moreover, the reversed process is not specified by the transition probabilities of the forward process. On the contrary, it depends heavily on $X_0$, in the sense that not only the initial distribution (the distribution of $X_n$) depends on the distribution of $X_0$, but also the transition probabilities $\prob(X_{m-1}=j\,|\,X_m=i)$ depend on the distribution of $X_0$. Indeed, if for example $X_0=i$ is deterministic, then the transition rules of the revered process should be such that it arrives to $i$ with probability $1$, no matter where it starts from (as long as we don't try to start the reversed process from an $X_n=j$ which is not reachable from $i$ in $n$ steps).

On the other hand, it makes perfect sense to ``start the time reversed process'' from any state $x\in S$ which is possible for $X_n$. This can be pictured in two different ways:
\begin{itemize}
 \item Calculate the transition probabilities $P^{\rev,m}_{i,j}:=\prob(X_{m-1}=j\,|\, X_m=i)$ (which are clearly determined by the distribution of $X_0$, the transition probabilities of the forward chain, and nothing else). Then choose $\tilde{X}_n=x$, and build the Markov chain $\tilde{X}_n,\tilde{X}_{n-1},\dots,\tilde{X}_0$ using the transition matrices $P^{\rev,m}$.
 \item Condition the Markov chain $X_n,X_{n-1},\dots,X_0$ -- as a random sequence -- on $\{X_n=x\}$ to obtain $\tilde{X}_n,\tilde{X}_{n-1},\dots,\tilde{X}_0$. Conditioning on $\{X_n=x\}$ and time reversal commute, so this process $\tilde{X}_n,\tilde{X}_{n-1},\dots,\tilde{X}_0$ is the same as if we first condition on $A:=\{X_n=x\}$ as in Section~\ref{sec:conditional_MC}, and then take time reversal.
\end{itemize}

\section{Proof of Theorem~\ref{thm:L_k_n_dominance}}

Let $n\ge l>1$ passengers arrive one by one. The proof revolves around comparing the cases when there are exactly $l$ nonempty buses in the end, and when there are exactly $l-1$. We first show that in the $l$ case, the number of nonempty buses is higher at all times -- at least stochastically. This is formulated in the following lemma.

\begin{convention}\label{conv:Ntilde}
 Consider the number of nonempty buses $N^{(l)}_m$ after the arrival of $m$ passengers, on the probability space $(\Omega^{(l,n,\NE)},\prob^\NE_{l,n})$ as described in Section~\ref{sec:noempty}. Call this conditioned process $\tilde{N}^{(l,n)}_m$ to avoid confusion with Lemma~\ref{lem:nonempty_bus_dominance_1}.
\end{convention}
 
\begin{lemma}\label{lem:nonempty_bus_dominance_2}
  $\tilde{N}^{(l,n)}_\bullet \sge \tilde{N}^{(l-1,n)}_\bullet$
\end{lemma}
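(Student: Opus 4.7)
My plan is to pass to the time-reversed chains. By Lemma~\ref{lem:MC_conditioning}, each $\tilde{N}^{(l,n)}_\bullet$ is a (time-inhomogeneous) Markov chain --- the pure birth chain $N^{(l)}_\bullet$ conditioned on its terminal value being $l$. By Section~\ref{sec:time_reverse}, its time reversal $R^{(l,n)}_s := \tilde{N}^{(l,n)}_{n-s}$ is itself a Markov chain, of pure death type, with deterministic initial state $R^{(l,n)}_0 = l$ and terminal state $R^{(l,n)}_n = 0$; the $(l-1)$-chain is defined analogously and starts at $l-1$. Since a pointwise coupling in forward time is equally a pointwise coupling in reverse time, it suffices to show $R^{(l,n)}_\bullet \sge R^{(l-1,n)}_\bullet$.

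The central claim I will establish is that the reverse transition probabilities do not depend on $l$. A direct count on $\Omega^{(l,n,\NE)}$ --- invoking the permutation symmetry of Lemma~\ref{lem:conditioning_on_N_equals_l} to reduce to a uniform surjection $[m] \to [i]$, and observing that the covering count for the tail $\omega_{m+1},\dots,\omega_n$ appears identically in both the joint and the marginal of the Bayes ratio --- should give
\begin{equation*}
 \prob^\NE_{l,n}\!\left(\tilde{N}^{(l,n)}_{m-1} = i - 1 \,\middle|\, \tilde{N}^{(l,n)}_m = i\right) = \frac{S(m-1,\,i-1)}{S(m,\,i)},
\end{equation*}
where $S(\cdot,\cdot)$ is the Stirling number of the second kind. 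Conceptually: conditional on $\tilde{N}^{(l,n)}_m = i$, the prefix $\omega_1,\dots,\omega_m$ is distributed as a uniform surjection onto a fixed $i$-set, and the chance that the final coordinate of such a surjection is a new value equals exactly $\frac{i!\,S(m-1,i-1)}{i!\,S(m,i)}$.

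Once the two reverse chains have the same transition kernel and deterministic initial states differing only by $R^{(l,n)}_0 = l > l - 1 = R^{(l-1,n)}_0$, dominance follows from an elementary coupling: drive both chains by common uniform thresholds, so that they jump together when at the same state and otherwise evolve with the same shared randomness. Since both are pure death chains with moves in $\{-1, 0\}$, the gap $R^{(l,n)}_s - R^{(l-1,n)}_s \in \{0, 1\}$ is preserved at every step. This is the pure-death analog of Lemma~\ref{lem:birth_process_dominance}, which the paper has explicitly licensed, and reversing time in the coupling yields the stated dominance $\tilde{N}^{(l,n)}_\bullet \sge \tilde{N}^{(l-1,n)}_\bullet$.

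The main obstacle I anticipate is the transition-probability calculation itself. The naive Bayes expansion produces the factors $\binom{l}{i}$, $\binom{l}{i-1}$, $l - i + 1$ and the $l$-dependent tail count $T(l, l-i, n-m) = \sum_{j=0}^{l-i}(-1)^j\binom{l-i}{j}(l-j)^{n-m}$, none of which is individually $l$-free. The clean miracle is that $T(l,l-i,n-m)$ cancels exactly between numerator and denominator of the conditional probability, while the remaining binomial and factorial prefactors collapse to the Stirling ratio displayed above. Carrying this cancellation out cleanly --- and taking care of the endpoint $i = l$, which is the deterministic starting state of the larger reverse chain --- is where the bulk of the bookkeeping sits.
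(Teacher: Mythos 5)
Your argument is correct and follows the paper's route: both proofs pass to the time-reversed chains, observe that the two reversed processes are pure death chains with the same ($l$-free) transition kernel started from $l$ and $l-1$ respectively, and conclude by a coupling in which the chains coalesce upon meeting, so that (moves being in $\{-1,0\}$) the larger chain can never cross below the smaller one. The one genuine difference is how the $l$-independence of the reverse kernel is obtained. The paper gets it softly: applying Lemma~\ref{lem:conditioning_on_N_equals_l} to the whole process $N^{(k)}_\bullet$ exhibits both conditioned chains as conditionings of one and the same chain on the terminal events $\{N^{(k)}_n=l\}$ and $\{N^{(k)}_n=l-1\}$, and reverse transition probabilities are unaffected by conditioning on the terminal state (Section~\ref{sec:time_reverse}). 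You instead compute the kernel explicitly; your formula $P^{\rev,m}_{i,i-1}=S(m-1,i-1)/S(m,i)$ is correct (the tail covering count does cancel), at the price of bookkeeping the paper avoids, and with the bonus of making the connection to Section~\ref{sec:Stirling} explicit.

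One caveat: drop the claim that under your common-threshold coupling the gap stays in $\{0,1\}$. Without knowing that $P^{\rev,m}_{i,i-1}$ is non-decreasing in $i$, the lower chain may jump down while the upper one stays, pushing the gap to $2$; that monotonicity is exactly Lemma~\ref{lem:first_passenger_lonely} (equivalently the Stirling inequality of Section~\ref{sec:Stirling}), which is proved later \emph{using} the present lemma, so invoking it here would be circular --- and the gap-at-most-one coupling is precisely the content of the later Lemma~\ref{lem:nonempty_bus_dominance_3}, not of this one. Fortunately you only need the gap to stay nonnegative, and your coupling gives that as is: the chains move together once equal, and steps of size at most one prevent a crossing without a prior meeting.
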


\begin{proof}
 If we apply Lemma~\ref{lem:conditioning_on_N_equals_l} to the function $f$ on $\{1,2,\dots,k\}^n$ which maps the passenger configuration to the entire nonempty bus count process $N^{(k)}_\bullet$, then we get that the processes $\tilde{N}^{(l,n)}_\bullet$ and $\tilde{N}^{(l-1,n)}_\bullet$ can be constructed by conditioning the same Markov chain $N^{(k)}_\bullet$ (for any $k\ge l$) on the events $\{N^{(k)}_n=l\}$ and $\{N^{(k)}_n=l-1\}$, respectively. This means that they are themselves Markov chains (by Lemma~\ref{lem:MC_conditioning}). Let $\overleftarrow{\tilde{N}}^{(l,n)}_m:=\tilde{N}^{(l,n)}_{n-m}$ and $\overleftarrow{\tilde{N}}^{(l-1,n)}_m:=\tilde{N}^{(l-1,n)}_{n-m}$ be their time reversed versions. Then $\overleftarrow{\tilde{N}}^{(l,n)}_\bullet$ and $\overleftarrow{\tilde{N}}^{(l-1,n)}_\bullet$ are pure death processes. Crucially, they have the same transition probabilities (as discussed in Section~\ref{sec:time_reverse}), and they are started from $\overleftarrow{\tilde{N}}^{(l,n)}_0=l>l-1=\overleftarrow{\tilde{N}}^{(l-1,n)}_0$. So they can clearly be coupled to ensure that $\overleftarrow{\tilde{N}}^{(l,n)}_m\ge \overleftarrow{\tilde{N}}^{(l-1,n)}_m$ for all $m$: just let them stick together if they meet. So $\overleftarrow{\tilde{N}}^{(l,n)}_\bullet \sge \overleftarrow{\tilde{N}}^{(l-1,n)}_\bullet$. Since these are just time reversals, this is exactly the statement of the lemma.
\end{proof}

Next we show that if more buses are nonempty, then the first passenger is more likely to travel alone.

\begin{lemma}\label{lem:first_passenger_lonely}
 For any $n\ge l>1$
 \begin{equation}
 \prob^\NE_{l,n}\left(B^{(l)}_1\notin \left\{B^{(l)}_2,\dots,B^{(l)}_n\right\}\right) \ge \prob^\NE_{l-1,n}\left(B^{(l-1)}_1\notin \left\{B^{(l-1)}_2,\dots,B^{(l-1)}_n\right\}\right).
 \end{equation}
\end{lemma}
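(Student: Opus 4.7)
The plan is to reduce the lemma to a single-time comparison of the NE-conditioned bus-count chains, translate the resulting inequality via time reversal, and then apply Lemma~\ref{lem:nonempty_bus_dominance_2}.

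First I would use the exchangeability of $(B^{(l)}_1,\dots,B^{(l)}_n)$ under $\prob^\NE_{l,n}$ (which is uniform on the permutation-invariant set $\Omega^{(l,n,\NE)}$) to swap passenger $1$ with passenger $n$:
\[
\prob^\NE_{l,n}\!\left(B^{(l)}_1\notin\{B^{(l)}_2,\dots,B^{(l)}_n\}\right)
= \prob^\NE_{l,n}\!\left(B^{(l)}_n\notin\{B^{(l)}_1,\dots,B^{(l)}_{n-1}\}\right).
\]
The right-hand event is $\{\tilde{N}^{(l,n)}_n > \tilde{N}^{(l,n)}_{n-1}\}$, and since the pure-birth chain $\tilde{N}^{(l,n)}$ satisfies $\tilde{N}^{(l,n)}_n = l$ and jumps by $0$ or $1$, this simplifies to $\{\tilde{N}^{(l,n)}_{n-1} = l-1\}$. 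Applying the same identity with $l-1$ in place of $l$, it suffices to prove
\[
\prob\!\left(\tilde{N}^{(l,n)}_{n-1} = l-1\right) \ge \prob\!\left(\tilde{N}^{(l-1,n)}_{n-1} = l-2\right).
\]

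Next I would pass to the time-reversed chains $\overleftarrow{\tilde{N}}^{(l,n)}$ and $\overleftarrow{\tilde{N}}^{(l-1,n)}$. As observed in the proof of Lemma~\ref{lem:nonempty_bus_dominance_2} (and in Section~\ref{sec:time_reverse}), these pure-death chains share a common time-inhomogeneous transition kernel and differ only in their starting values $l$ and $l-1$. The two events in the previous display are precisely ``the reversed chain drops at its very first step.'' Hence the lemma reduces to showing that the common first-step drop probability is at least as large when the chain starts from $l$ as when it starts from $l-1$.

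The main obstacle will be establishing this single-step pointwise monotonicity of the reverse kernel: a naive use of Lemma~\ref{lem:nonempty_bus_dominance_2} at the single time $m = n-1$ yields only tautologies, since the possible values of the two chains at that time sit in shifted ranges ($\{l-1,l\}$ versus $\{l-2,l-1\}$). To overcome this I would try to exploit the full process-level dominance together with the identity of the two reverse kernels, for instance by building a joint coupling on a common probability space (extending the ``stick-together'' coupling from the proof of Lemma~\ref{lem:nonempty_bus_dominance_2}) and realizing both chains simultaneously as conditionings of a single unconditioned chain $N^{(k)}$ via Lemma~\ref{lem:conditioning_on_N_equals_l}, so that the joint trajectory can be analyzed. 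Should such a coupling-based argument prove elusive, the statement reduces (via the explicit formula $\prob^\NE_{l,n}(B^{(l)}_1 \text{ lonely}) = S(n-1,l-1)/S(n,l)$) to the strong log-concavity inequality $l\,S(n-1,l)\,S(n-1,l-2) \le (l-1)\,S(n-1,l-1)^2$ for Stirling numbers of the second kind, which could alternatively be attacked by direct manipulation of the recurrence $S(n,l) = l S(n-1,l) + S(n-1,l-1)$ or via the combinatorial approach indicated in Section~\ref{sec:Stirling}.
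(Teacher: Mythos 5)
Your opening reduction is correct, and it is in fact the same identity the paper exploits later (in the proof of Lemma~\ref{lem:nonempty_bus_dominance_3}): by exchangeability of $(B^{(l)}_1,\dots,B^{(l)}_n)$ under $\prob^\NE_{l,n}$, the probability that passenger $1$ is lonely equals $\prob\bigl(\tilde{N}^{(l,n)}_{n-1}=l-1\bigr)$, i.e.\ the first-step drop probability of the reversed chain started from $l$. But this reformulation is exactly equivalent to the lemma, and the remaining step --- that the reverse kernel's drop probability $P^{\rev,n}_{i,i-1}$ is nondecreasing in $i$ --- is where all the difficulty lives, and your proposal does not actually prove it. As you yourself note, Lemma~\ref{lem:nonempty_bus_dominance_2} applied at the single time $n-1$ is vacuous (the supports $\{l-1,l\}$ and $\{l-2,l-1\}$ make the dominance inequalities tautological), and the ``stick-together'' coupling gives no extra information: two pure death chains with a common kernel started from $l$ and $l-1$ can be ordered for free because they can only meet, never cross, so that coupling neither uses nor implies any monotonicity of the kernel in the position. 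What you need is a comparison of the kernel at two \emph{different} states at the \emph{same} time, which no repackaging of single-time marginal dominance will produce; unwinding the desired inequality via Bayes' rule turns it into essentially a log-concavity statement for the distribution of $N^{(k)}_{n-1}$, equivalently an ultra-log-concavity inequality for Stirling numbers of the second kind. Your fallback form $l\,S(n-1,l)\,S(n-1,l-2)\le(l-1)\,S(n-1,l-1)^2$ is a correct equivalent, but it does not follow from ``direct manipulation of the recurrence'': it needs the real-rootedness of the Bell polynomials plus Newton's inequality, i.e.\ precisely the Csikv\'ari argument reproduced in Section~\ref{sec:Stirling}. So the probabilistic route you sketch has a genuine gap, and the combinatorial escape hatch is only available by importing the paper's other, nontrivial proof.

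The idea you are missing, and which the paper uses, is to avoid collapsing to a single time marginal. Condition on the entire trajectory $\tilde{N}^{(l,n)}_\bullet$: given the path, passenger $1$ stays lonely with conditional probability
\begin{equation*}
 R\bigl(\tilde{N}^{(l,n)}_\bullet\bigr)=\prod_{m:\ \tilde{N}^{(l,n)}_m=\tilde{N}^{(l,n)}_{m-1}}\left(1-\frac{1}{\tilde{N}^{(l,n)}_m}\right),
\end{equation*}
since at each time a passenger boards an already occupied bus, she joins passenger $1$'s bus with probability $1/\tilde{N}^{(l,n)}_m$, conditionally independently across such times. The functional $R$ is monotone nondecreasing along pointwise-ordered admissible paths (a larger path has fewer factors, and each factor is larger), so the \emph{pathwise} coupling supplied by Lemma~\ref{lem:nonempty_bus_dominance_2} gives $R(\tilde{N}^{(l,n)}_\bullet)\ge R(\tilde{N}^{(l-1,n)}_\bullet)$ almost surely, and taking expectations finishes the proof. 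This is where the full process-level dominance earns its keep; at the level of a single time marginal it cannot.
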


\begin{proof}
 Let us construct the Markov process $(B^{(l)}_1,\dots,B^{(l)}_n)$ (under the probability $\prob^\NE_{l,n}$, so this is not an i.i.d. sequence) the following way:
  \begin{enumerate}
   \item first  generate the sequence $\tilde{N}^{(l,n)}_\bullet$ (which is the same as deciding when to put a passenger on a new bus),
   \item then decide where to seat each passenger one by one, choosing in each step uniformly from the nonempty or the empty buses, depending on the sequence $\tilde{N}^{(l,n)}_\bullet$ chosen.
  \end{enumerate}
 This construction can be seen to yield the correct distribution for the passenger configuration by symmetry. 
 Then, the conditional probability of the first passenger remaining alone all the time, conditioned on $\tilde{N}^{(l,n)}_\bullet$ is
 \begin{equation}\label{eq:first_pasenger_lonely_conditional_prob}
  \prob^\NE_{l,n}\left(B^{(l)}_1\notin \left\{B^{(l)}_2,\dots,B^{(l)}_n\right\} \,\middle|\, \tilde{N}^{(l,n)}_\bullet \right) =
  \prod_{m\in M} \left(1-\frac{1}{\tilde{N}^{(l,n)}_m}\right)
 \end{equation}
 where
 \begin{equation}
  M=M\left(\tilde{N}^{(l,n)}_\bullet\right)=\left\{m\in\{2,3,\dots,n\} \,\middle|\, \tilde{N}^{(l,n)}_m=\tilde{N}^{(l,n)}_{m-1} \right\}
 \end{equation}
 is the set of time moments when no new bus is taken, meaning that passenger $m$ takes a bus which is already nonempty. Indeed, on such an occasion the first passenger gets a companion with probability $\nicefrac{1}{\tilde{N}^{(l,n)}_m}$, (conditionally) independently of what happened before (conditioned on $\tilde{N}^{(l,n)}_\bullet$, of course).
 
 On the RHS of (\ref{eq:first_pasenger_lonely_conditional_prob}),
 \begin{equation}
  R(\underline{i}):=\prod_{m\in M(\underline{i})} \left(1-\frac{1}{i_m}\right)
 \end{equation}
 is monotone increasing in $\underline{i}\in \IN^{n+1}$ (note that $\underline{i}$ must be a pure birth sequence with $i_0=0$ and $i_1=1$).
 Indeed, if $\underline{j}\ge\underline{i}$ then $R(\underline{j})$ contains at most as many factors as $R(\underline{i})$ (actually exactly $1$ less in the interesting case $j_n=l,i_n=l-1$) (each factor is less than $1$), and each factor is at least as big as the corresponding factor in $R(\underline{i})$.
 
 The law of total probability gives that
 \begin{equation}\label{eq:first_pasenger_lonely_total_prob}
  \prob^\NE_{l,n}\left(B^{(l)}_1\notin \left\{B^{(l)}_2,\dots,B^{(l)}_n\right\} \right) = \expect R(\tilde{N}^{(l,n)}_\bullet)
 \end{equation}
  where, again, $R$ is monotone increasing in the sense that $R(\underline{j})\ge R(\underline{i})$ whenever both make sense and $\underline{j}\ge \underline{i}$. Note that in this expression the function $R$ does not directly depend on $l$, and the expectation $\expect$ does not need to be indexed by $n$ and $l$ because the notation $\tilde{N}^{(l,n)}_\bullet$ already carries the information about the distribution of the process by Convention~\ref{conv:Ntilde}.
  
  Now Lemma~\ref{lem:nonempty_bus_dominance_2} gives us a coupling where $\tilde{N}^{(l,n)}_\bullet\ge \tilde{N}^{(l-1,n)}_\bullet$ almost surely, so $R\left(\tilde{N}^{(l,n)}_\bullet\right)\ge R\left(\tilde{N}^{(l-1,n)}_\bullet\right)$ almost surely as well. The statement of the lemma follows form (\ref{eq:first_pasenger_lonely_total_prob}).
\end{proof}

\begin{remark}
 This proof could be modified to give stochastic dominance between the number of fellow passengers of passenger $1$ in the two systems, by coupling. However, as here, one would have to be careful about comparing only time moments when $N_m$ does not grow -- a kind of time shift which is different for the two processes, to get them synchronized.
\end{remark}
\begin{remark}
 After hearing the result, Péter Csikvári gave a purely combinatorial proof, see Section~\ref{sec:Stirling}.
\end{remark}

Let's get back to the evolution of the number of nonempty buses as passengers arrive. We compare the cases of $l$ and $l-1$ nonempty buses by time $n$ -- i.e. the processes $\tilde{N}^{(l,n)}_\bullet$ and $\tilde{N}^{(l-1,n)}_\bullet$. We aim at a precise understanding of their relation. Since $\tilde{N}^{(l,n)}_0=\tilde{N}^{(l-1,n)}_0=0$, $\tilde{N}^{(l,n)}_n=l$ and $\tilde{N}^{(l-1,n)}_n=l-1$, it is obvious that the difference $\tilde{N}^{(l,n)}_m-\tilde{N}^{(l-1,n)}_m$ grows up from $0$ to $1$ as $m$ goes from $0$ to $n$. We also know from Lemma~\ref{lem:nonempty_bus_dominance_2} that, with a suitable coupling, the difference can be chosen to be always nonnegative. We now show an even better coupling which ensures that the increase from $0$ to $1$ happens in the simplest possible way: monotonically -- meaning that the difference increases exactly once, and stays constant at all other times.

\begin{lemma}\label{lem:nonempty_bus_dominance_3}
 For any $n\ge l>1$ the processes $\tilde{N}^{(l,n)}_\bullet$ and $\tilde{N}^{(l-1,n)}_\bullet$ can be coupled such that the difference process $\tilde{N}^{(l,n)}_\bullet-\tilde{N}^{(l-1,n)}_\bullet$ grows monotonically from $0$ to $1$.
\end{lemma}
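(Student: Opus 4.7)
The plan is to exploit time reversal, as already done in Lemma~\ref{lem:nonempty_bus_dominance_2}, but to upgrade the coupling there. Setting $\overleftarrow{\tilde{N}}^{(l,n)}_s:=\tilde{N}^{(l,n)}_{n-s}$ and analogously for $l-1$, both reversed processes are pure death chains with the \emph{same} time-inhomogeneous transitions (by the argument recalled in the proof of Lemma~\ref{lem:nonempty_bus_dominance_2}), starting deterministically from $l$ and $l-1$ and both hitting $0$ at reversed time $n$. I would couple them so that the reversed difference stays at $1$ for an initial stretch and then drops to $0$ (and stays there); translating back to forward time, this is exactly the monotone growth from $0$ to $1$ that the lemma asserts.

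The coupling uses a single uniform $U_s\in[0,1]$ at each reversed step. Writing $q^m_i:=\prob(N^{(k)}_{m-1}=i-1\,|\,N^{(k)}_m=i)$ for the common reversed death probability associated with the forward step $m\to m-1$, the rule is: when both chains are at the same state $i$ they both die iff $U_s<q^m_i$ (and otherwise both stay), and when they are at $(i+1,i)$ the larger dies iff $U_s<q^m_{i+1}$ and the smaller iff $U_s<q^m_i$. Under this rule the smaller can die only when the larger also dies, so the reversed difference never exceeds $1$; the first time $q^m_i\le U_s<q^m_{i+1}$ occurs, the larger drops and the chains merge, and then they move together forever. The only ingredient this coupling needs in order to be a well-defined joint law is the pointwise monotonicity $q^m_{i+1}\ge q^m_i$ of the reversed death probabilities in the state variable.

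To get this monotonicity I would just compute $q^m_i$ explicitly. A short Bayes calculation using $\prob(N^{(k)}_m=i)=\binom{k}{i}\,i!\,S(m,i)/k^m$ (with $S(m,i)$ the Stirling number of the second kind) gives $q^m_i=S(m-1,i-1)/S(m,i)$, independent of $k$, which also makes transparent why the conditioned and unconditioned reversed chains have the same transitions. The required inequality $q^m_{i+1}\ge q^m_i$ becomes $S(m-1,i)\,S(m,i)\ge S(m-1,i-1)\,S(m,i+1)$, and upon substituting the Stirling recurrence $S(m,i)=i\,S(m-1,i)+S(m-1,i-1)$ it reduces to the sharpened log-concavity $i\,S(m-1,i)^2\ge(i+1)\,S(m-1,i-1)\,S(m-1,i+1)$. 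This is a Newton-type inequality that follows from Harper's theorem on the real-rootedness of the Bell polynomial $\sum_i S(m-1,i)\,x^i$. I expect this Stirling inequality (or an equivalent direct probabilistic monotonicity of $\expect[L_m\,|\,N_m=i]$ in $i$, since $q^m_i=\expect[L_m\,|\,N_m=i]/m$ by exchangeability of the balls) to be the main obstacle; once it is in hand, the coupling above and the translation back to forward time are essentially bookkeeping.
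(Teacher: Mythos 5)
Your proposal is correct, and the coupling itself is identical to the one in the paper: reverse time, observe that both conditioned chains become pure death chains with the same (common, $l$-independent) transition probabilities started from $l$ and $l-1$, and run them off a single uniform per step so that the smaller can only die when the larger does; the monotonicity $q^m_{i+1}\ge q^m_i$ of the reversed death probabilities is exactly the ingredient that makes this a legitimate joint law. Where you genuinely diverge is in how that monotonicity is obtained. The paper identifies $q^m_i$ probabilistically as $\prob^\NE_{i,m}(\text{passenger }m\text{ travels alone})$, which by exchangeability equals the probability that passenger $1$ travels alone, and then invokes Lemma~\ref{lem:first_passenger_lonely} --- itself proved by a coupling that rests on Lemma~\ref{lem:nonempty_bus_dominance_2}. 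You instead compute $q^m_i=S(m-1,i-1)/S(m,i)$ explicitly and reduce the monotonicity, via the recurrence $S(m,i)=iS(m-1,i)+S(m-1,i-1)$, to the sharpened log-concavity $i\,S(m-1,i)^2\ge(i+1)\,S(m-1,i-1)\,S(m-1,i+1)$, which follows from Newton's inequality together with Harper's real-rootedness theorem for the Bell polynomials. This is precisely the combinatorial proof of the equivalent Stirling inequality that the paper records in Section~\ref{sec:Stirling} (attributed there to Csikv\'ari), so your route is known to work; its advantage is that it short-circuits the chain Lemma~\ref{lem:nonempty_bus_dominance_2} $\to$ Lemma~\ref{lem:first_passenger_lonely} $\to$ Lemma~\ref{lem:nonempty_bus_dominance_3} (the present lemma subsumes Lemma~\ref{lem:nonempty_bus_dominance_2} anyway), while its cost is the import of a nontrivial external result (real-rootedness), whereas the paper's argument is self-contained and purely probabilistic. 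Your Bayes computation showing $q^m_i$ is independent of $k$, and the alternative identification $q^m_i=\expect[L_m\mid N_m=i]/m$, are both correct.
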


\begin{proof}
 The proof is a modification of the proof of Lemma~\ref{lem:nonempty_bus_dominance_2}, so we don't reintroduce the notation. There we saw that the time reversed processes
 $\overleftarrow{\tilde{N}}^{(l,n)}_\bullet$ and
 $\overleftarrow{\tilde{N}}^{(l-1,n)}_\bullet$ are time inhomogeneous pure death processes with the same transition probabilities, started from $\overleftarrow{\tilde{N}}^{(l,n)}_0=l>l-1=\overleftarrow{\tilde{N}}^{(l-1,n)}_0$. 
 
 Having the same transition probabilities means that if we denote any of the forward processes as $X_m$ and the (time dependent) reverse transition probabilities as
 $P^{\rev,m}_{i,j}:=\prob(X_{m-1}=j\,|\, X_m=i)$, then this $P^{\rev,m}_{i,j}$ does not depend on any $n$ or $l$ or $k$ (as long as $n\ge m$ and $k,l\ge i$). Instead, obviously $P^{\rev,m}_{i,j}=0$ unless $j\in\{i,i-1\}$, and
 \begin{equation}\label{eq:reverse_jump_prob}
  P^{\rev,m}_{i,i-1}=\prob( X_m > X_{m-1} \,|\, X_m=i).
 \end{equation}
 But $X_m$ is the number of nonempty buses, so $X_m > X_{m-1}$ if and only if the $m$th passenger chose an empty bus when she arrived. Equivalently, $X_{m-1} < X_m $ means that if we remove the $m$th passenger, the number of nonempty buses decreases. Either way we see that $X_m > X_{m-1}$ if and only if the $m$th passenger \emph{travels alone} after arrival (at least until the arrival of the next passenger). But the conditional probability measure describing the arrival process up to time $m$ under the condition that there are exactly $i$ nonempty buses at time $m$ (which is $\{X_m=i\}$) already has the name $\prob^\NE_{i,m}$ by (\ref{eq:prob-NE-A}), so (\ref{eq:reverse_jump_prob}) means
 \begin{equation}
  P^{\rev,m}_{i,i-1} = \prob^\NE_{i,m} (\{\text{passenger number $m$ travels alone}\}).
 \end{equation}
 By symmetry, the probability of travelling alone is the same for all passengers, so it could just as well be the first passenger. Formally, using the notation of Section~\ref{sec:noempty}:
 \begin{equation}
  P^{\rev,m}_{i,i-1} = \prob^\NE_{i,m} (\{\text{passenger number $1$ travels alone}\})=\prob^\NE_{i,m}\left(B^{(i)}_1\notin \left\{B^{(i)}_2,\dots,B^{(i)}_m\right\}\right).
 \end{equation}
 Lemma~\ref{lem:first_passenger_lonely} allows us to compare these probabilities for different values of $i$, yielding
 \begin{equation}
  P^{\rev,m}_{i,i-1} \ge P^{\rev,m}_{i-1,i-2}.
 \end{equation}
 
 Now we are ready to construct the coupling between our two pure death processes 
 $\overleftarrow{\tilde{N}}^{(l,n)}_\bullet$ and
 $\overleftarrow{\tilde{N}}^{(l-1,n)}_\bullet$:
 \begin{enumerate}
  \item Let them stick together when they meet: this is possible because they have the same transition probabilities;
  \item If they have not yet met, and the smaller one jumps left, let the bigger one also jump left: this is possible because the probability of jumping left is always bigger (or equal) at bigger positions. 
 \end{enumerate}
 This coupling ensures that the difference $\overleftarrow{\tilde{N}}^{(l,n)}_\bullet - \overleftarrow{\tilde{N}}^{(l-1,n)}_\bullet$, which is initially $1$, will never increase, it will decrease to zero on exactly one occasion and then stay zero until the end. This description of the reverse processes is exactly what the lemma states.
\end{proof}

We are ready to prove stochastic monotonicity of the number of lonely passengers as a function of the number of nonempty buses. To avoid misunderstandings, let
$\tilde{L}^{(l,n)}_\bullet$ denote the restriction of $L^{(l)}_\bullet$ to $\Omega^{(l,n,\NE)}$ viewed as a stochastic process on the probability space
$(\Omega^{(l,n,\NE)},\prob^\NE_{l,n})$ (see Section~\ref{sec:noempty} for the notation), so we are looking at the evolution of the number of lonely passengers, as passengers arrive, conditioned on the event that there are exactly $l$ nonempty buses in the end (which is after $n$ passengers).

\begin{proposition}\label{prop:L_l_n_dominance}
 For any $1<l\le n$, $\tilde{L}^{(l,n)}_\bullet \sge \tilde{L}^{(l-1,n)}_\bullet$.
\end{proposition}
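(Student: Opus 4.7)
The plan is to construct a joint coupling of the full passenger processes (not just of the nonempty-bus counts) refining the coupling of $\tilde N^{(l,n)}_\bullet$ and $\tilde N^{(l-1,n)}_\bullet$ provided by Lemma~\ref{lem:nonempty_bus_dominance_3}. Fix such a coupling and let $m^*$ be the unique (random) time at which the $l$-process jumps while the $(l-1)$-process does not; at every other time step, either both $\tilde N$ jump together or neither does. Conditional on an $\tilde N$-trajectory, each system's passenger configuration is generated step by step with independent uniform choices --- uniform empty bus at a new-bus step, uniform nonempty bus at an old-bus step --- so we are free to pick the joint dependence between the two systems however we like, as long as the marginal laws are preserved.

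For $m < m^*$ I will couple so that the two systems make the same choice at each step, which is legal because they are in lockstep with the same number of nonempty buses; the two systems then stay literally identical up to time $m^*-1$. At time $m^*$ the $l$-system opens a fresh bus, which I call the \emph{ghost}, while the $(l-1)$-system sends its $m^*$-th passenger to a uniformly random nonempty bus. From $m^*$ onwards, I will maintain a bijection between the nonempty buses of the $(l-1)$-system and all nonempty buses of the $l$-system except the ghost, extending the bijection by pairing new buses as they are opened simultaneously in the two systems. At each subsequent old-bus step I sample $U$ uniformly from the nonempty buses of the $l$-system and place the $m$-th passenger on bus $U$ there; if $U$ is a matched bus, the $(l-1)$-system's passenger goes to the matched bus, and if $U$ is the ghost, the $(l-1)$-system's passenger is placed on an independent uniformly random nonempty bus. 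A short calculation confirms that the $(l-1)$-system's marginal is still uniform on its nonempty buses, so the coupling has the correct marginal laws on both sides.

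The invariant to maintain from time $m^*$ on is that every matched pair of buses has at most as many passengers in the $l$-system as in the $(l-1)$-system, while the ghost bus is always nonempty. This is preserved at each of the three step types (identical pre-$m^*$ step, matched old-bus step, ghost old-bus step) by elementary bookkeeping. Given the invariant, any matched bus of size $1$ in the $(l-1)$-system automatically has size $1$ in the $l$-system --- because $c^{(l)}_i \le c^{(l-1)}_i = 1$ and $c^{(l)}_i \ge 1$ forces $c^{(l)}_i = 1$ --- hence is lonely in both; summing, $\tilde L^{(l-1,n)}_m$ is bounded above by the number of matched buses of size $1$ in the $l$-system, which is in turn bounded above by $\tilde L^{(l,n)}_m$. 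Thus $\tilde L^{(l,n)}_m \ge \tilde L^{(l-1,n)}_m$ almost surely for every $m$, which is the desired process-level dominance. The main obstacle will be pinning down the right old-bus coupling after $m^*$: the naive identity matching breaks the $(l-1)$-side marginal, and the ``ghost plus independent fallback'' recipe is what lets the single-step discrepancy at $m^*$ propagate without ever flipping the inequality between the two lonely counts.
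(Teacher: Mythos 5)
Your proof is correct. It shares the paper's scaffolding---the two-stage construction of the passenger configuration conditional on the nonempty-bus trajectories, with Lemma~\ref{lem:nonempty_bus_dominance_3} as the essential input and $m^*$ the unique time at which the two trajectories disagree---but the coupling you build on top of that scaffolding is genuinely different. The paper never looks at individual buses after conditioning: it treats $\tilde L^{(l,n)}_\bullet$ and $\tilde L^{(l-1,n)}_\bullet$ as chains whose down-jump probability at an old-bus step is $\tilde L_m/\tilde N_m$, and couples the down-jumps so that whenever the two lonely counts are equal and the larger system's count decreases, so does the smaller system's, using $\tilde N^{(l,n)}_m\ge \tilde N^{(l-1,n)}_m$ to compare the two probabilities. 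Your ghost-bus construction instead couples the configurations themselves and maintains the coordinatewise invariant $c^{(l)}_i\le c^{(l-1)}_i$ over matched buses; the marginal check $\frac{N}{N+1}\cdot\frac1N+\frac1{N+1}\cdot\frac1N=\frac1N$ is exactly right, and the invariant does survive all three step types. Your route buys something slightly stronger than the proposition: since $1\le c^{(l)}_i\le c^{(l-1)}_i=1$ forces equality, the set of lonely buses of the $(l-1)$-system is, under the matching, \emph{contained} in the set of lonely matched buses of the $l$-system, a setwise inclusion rather than only an inequality of counts; the paper's version is shorter because it only ever tracks two integers. One small wording caveat: before $m^*$ the two systems are identical only up to the bus-labelling bijection (they live on $l$ and $l-1$ buses, so the uniform choice of a \emph{new} bus is over sets of different sizes); this is harmless because the lonely count depends only on the induced partition of passengers, but ``literally identical'' is a slight overstatement.
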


\begin{proof}
 Like in the proof of Lemma~\ref{lem:first_passenger_lonely}, we
 construct the Markov process $\left(B^{(l)}_1,\dots,B^{(l)}_n\right)$ under the probability $\prob^\NE_{l,n}$ by first generating the sequence $\tilde{N}^{(l,n)}_\bullet$ (which is the same as deciding when to put a passenger on a new bus), and then deciding where to seat each passenger one by one, choosing in each step uniformly from the nonempty or the empty buses, depending on the sequence $\tilde{N}^{(l,n)}_\bullet$ chosen. Then, at each step (from time $m$ to $m+1$)
 \begin{enumerate}
  \item if $\tilde{N}^{(l,n)}$ grows (by $1$), then $\tilde{L}^{(l,n)}$ also grows (by $1$);
  \item if $\tilde{N}^{(l,n)}$ does not grow, then $\tilde{L}^{(l,n)}$
   \begin{enumerate}
    \item decreases by $1$ with probability $\frac{\tilde{L}^{(l,n)}_m}{\tilde{N}^{(l,n)}_m}$
    \item and stays constant with the remaining probability.
   \end{enumerate}
 \end{enumerate}
 Now we construct a coupling between $\tilde{L}^{(l,n)}_\bullet$ and $\tilde{L}^{(l-1,n)}_\bullet$ by building the two systems simultaneously, making sure that $\tilde{L}^{(l,n)}_m \ge \tilde{L}^{(l-1,n)}_m$ for all $m$. This holds for $m=0$ since both sides are $0$. 
 Let us couple $\tilde{N}^{(l,n)}_\bullet$ and $\tilde{N}^{(l-1,n)}_\bullet$ as in Lemma~\ref{lem:nonempty_bus_dominance_3} and then start the construction of the passenger configurations. In this case
 \begin{enumerate}
  \item whenever $\tilde{N}^{(l-1,n)}$ grows, $\tilde{N}^{(l,n)}$ also grows, which means that whenever $\tilde{L}^{(l-1,n)}$ grows, so does $\tilde{L}^{(l,n)}$. Thus $\tilde{L}^{(l-1,n)}$ has no chance to overtake when growing: it could only ever become bigger than $\tilde{L}^{(l,n)}$ when that decreases.
  \item However, if $\tilde{L}^{(l,n)}$ is bigger (by at least $1$) and it decreases (by $1$), that is no problem: the worst thing that can happen is that they become equal ($\tilde{L}^{(l-1,n)}$ can not increase on such occasions).
  \item On the other hand, if $\tilde{L}^{(l,n)}_m=\tilde{L}^{(l-1,n)}_m=:L$ and there is a chance for $\tilde{L}^{(l,n)}$ to decrease (because $\tilde{N}^{(l,n)}$ does not grow), then the probability for $\tilde{L}^{(l-1,n)}$ to decrease is at least as big:
  \begin{align}
      \prob\left(\tilde{L}^{(l-1,n)}_{m+1}<\tilde{L}^{(l-1,n)}_m \,\middle|\, \{\text{all these}\}\right) & =\frac{L}{\tilde{N}^{(l-1,n)}_m} \\
      & \ge \frac{L}{\tilde{N}^{(l,n)}_m} = \prob\left(\tilde{L}^{(l,n)}_{m+1}<\tilde{L}^{(l,n)}_m \,\middle|\, \{\text{all these}\}\right)
  \end{align}
  because $\tilde{N}^{(l,n)}_m\ge \tilde{N}^{(l-1,n)}_m$ (by Lemma~\ref{lem:nonempty_bus_dominance_3}).
 \end{enumerate}
 So we can couple the constructions so that whenever $\tilde{L}^{(l,n)}_m=\tilde{L}^{(l-1,n)}_m$ and $\tilde{L}^{(l,n)}$ decreases, then so does $\tilde{L}^{(l-1,n)}$, meaning that $\tilde{L}^{(l-1,n)}$ can never become bigger.
\end{proof}

Having solved the main difficulty, we are ready to complete the proof of the main theorem.

\begin{proof}[Proof of Theorem~\ref{thm:L_k_n_dominance}]
 We will first prove the non-strict stochastic dominance $L^{(k+1)}_n \sge L^{(k)}_n$, meaning $\prob_{k+1,n}(L^{(k+1)}_n\ge u) \ge \prob_{k,n}(L^{(k)}_n\ge u)$ for every $u\in\IN$. Then Lemma~\ref{lem:prob_not_equal} will do the rest of the job.
 
 By the law of total probability,
 \begin{equation}
  \prob_{k,n}(L^{(k)}_n\ge u) = \sum_{l=1}^\infty \prob_{k,n}(N^{(k)}_n=l)\prob_{k,n}(L^{(k)}_n\ge u \,|\, N^{(k)}_n=l).
 \end{equation}
 
 If we apply Lemma~\ref{lem:conditioning_on_N_equals_l} to the function $f$ on $\{1,2,\dots,k\}^n$ which maps the passenger configuration to the final number of lonely passengers $L^{(k)}_n$, then we get that
 \begin{equation}
  \prob_{k,n}(L^{(k)}_n\ge u \,|\, N^{(k)}_n=l) = \prob^\NE_{l,n} (L^{(k)}_n\ge u) = \prob(\tilde{L}^{(l,n)}_n\ge u),
 \end{equation}
 since the restriction of $L^{(k)}_n$ to $\Omega^{(k,n,NE)}$ is $\tilde{L}^{(l,n)}_n$, and the indices $k$ and $l$ can be omitted from the probability $\prob$, because the notation $\tilde{L}^{(l,n)}_n$ specifies the distribution. Putting these together,
 \begin{equation}\label{eq:P_L_ge_u}
  \prob_{k,n}(L^{(k)}_n\ge u) = \sum_{l=1}^\infty \prob_{k,n}(N^{(k)}_n=l)\prob(\tilde{L}^{(l,n)}_n\ge u).
 \end{equation}

 With $u$ and $n$ fixed let $g: l \mapsto \prob(\tilde{L}^{(l,n)}_n\ge u)$. With this notation, Proposition~\ref{prop:L_l_n_dominance} says that $g:\IN\to\IR$ is non-decreasing, and (\ref{eq:P_L_ge_u}) says that
 \begin{equation}\label{eq:P_L_ge_u_as_expectation}
  \prob_{k,n}(L^{(k)}_n\ge u) = \sum_{l=1}^\infty \prob_{k,n}(N^{(k)}_n=l) g(l) = \expect g(N^{(k)}_n) \quad \text{ for every $k$}.
 \end{equation}
 Here the expectation $\expect$ need not be indexed by $k$ and $n$, because the notation $N^{(k)}_n$ specifies the distribution.
 
 Lemma~\ref{lem:nonempty_bus_dominance_1} ensures the existence of a coupling such that $N^{(k+1)}_n\ge N^{(k)}_n$ almost surely, so $g(N^{(k+1)}_n)\ge g(N^{(k)}_n)$ almost surely as well. Then (\ref{eq:P_L_ge_u_as_expectation}) gives 
 \begin{equation}
   \prob_{k+1,n}(L^{(k+1)}_n\ge u) \ge \prob_{k,n}(L^{(k)}_n\ge u),
 \end{equation}
 which is the non-strict version of the main statement of the theorem, as said at the start.
 
 In the special case $u=1$ this implies $p_{n,k+1}\ge p_{n,k}$. Then the strict inequalities in the statement of the theorem follow from Lemma~\ref{lem:prob_not_equal}.
 \end{proof}

\renewcommand\thesection{\Alph{section}}
\setcounter{section}{0}
\section{Appendix}

\subsection{Motivation}
\label{sec:motivation}

The problem studied in this paper was designed as a toy example to feature a key difficulty in the following, still open problem of László Márton Tóth.

Consider a discrete time random walk of finite length $n$ on a tree. We say that a \emph{regeneration occurs} if there is an edge which is traversed exactly once. Let $T_3$ denote the $3$-regular tree, and let $T$ denote any tree where every vertex has degree \emph{at least} $3$. Let $X_0,X_1,\dots,X_n$ be a simple symmetric random walk on $T_3$, and let $Y_0,Y_1,\dots,Y_n$ be a simple symmetric random walk on $T$. Prove or disprove the following

\begin{conjecture}[László Márton Tóth, 2023]
For any fixed $n$ as above,
\begin{equation}
\prob(\text{A regeneration occurs for $Y$})\ge \prob(\text{A regeneration occurs for $X$})
\end{equation}
or, more generally,
\begin{equation}
 (\text{number of regenerations for $Y$}) \sge (\text{number of regenerations for $X$}).
\end{equation}
\end{conjecture}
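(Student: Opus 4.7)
I would attempt to mirror the proof of Theorem~\ref{thm:L_k_n_dominance}, with vertex degree playing the role of the bus count $k$ and each edge crossed exactly once (a regeneration) playing the role of a lonely passenger. The first step is a reduction: given a tree $T$ with all degrees at least $3$, I would build a chain $T_3 = T^{(0)}, T^{(1)}, \dots, T^{(N)} = T$, where each $T^{(i+1)}$ differs from $T^{(i)}$ by attaching one extra edge (and the corresponding pendant subtree of $T$) at a single vertex. By telescoping, it then suffices to prove that the number of regenerations stochastically increases under one such modification.

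\textbf{One-edge comparison.} For the elementary step, fix a vertex $v$ of degree $d$ in $T^{(i)}$ and attach an extra subtree $S$ via a new edge $e^*$ to form $T^{(i+1)}$. When the walker on $T^{(i+1)}$ is at $v$, it chooses $e^*$ with probability $\frac{1}{d+1}$ (an \emph{extra excursion} into $S$), and otherwise picks uniformly among the $d$ original neighbours. I would try to couple the two walks step by step so that, outside excursions into $S$, they coincide. Excursions would produce new regenerations inside $S$, hopefully compensating for any lost regenerations in the main tree. A helpful preprocessing step would be to condition on the walk's trace (the subtree of edges that are ever visited), in analogy with the no-empty-buses reduction of Section~\ref{sec:noempty}.

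\textbf{Main obstacle.} The walks share the same total length $n$, so extra excursions cost steps that the $T^{(i)}$-walker is still spending in the main tree. Even worse, returning from an excursion to $v$ can retraverse an edge that had just been crossed exactly once in the main tree, turning a former regeneration into a multiply-crossed edge. Unlike the bus problem, where passengers are independent, the walk's behaviour is strongly path dependent and the regeneration count does not evolve monotonically in time (a regeneration can be destroyed later), so controlling these trade-offs is substantially harder than tracking $L^{(l)}_m$.

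\textbf{Where to search for the missing ingredient.} Following the structure of the bus proof, I would look for an intermediate Markov chain in the spirit of $\tilde{N}^{(l,n)}_\bullet$ (perhaps the pair consisting of the current vertex and some coarse invariant of the trace so far) and analyse its time reversal as in Section~\ref{sec:time_reverse}. The key new ingredient would be an analog of Lemma~\ref{lem:first_passenger_lonely}: conditional on suitable sufficient statistics of the trace, the first edge crossed is more likely to be a regeneration when vertex degrees are larger. Combined with a Proposition~\ref{prop:L_l_n_dominance}-type step-by-step coupling, this would close the argument. The persistence of the conjecture as an open problem suggests that no such clean conditioning has yet been found, and I expect the real difficulty lies precisely here; a purely combinatorial argument in the spirit of Csikvári's reformulation (Section~\ref{sec:Stirling}) might also be worth exploring as a parallel route.
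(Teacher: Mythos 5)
This statement is not proved in the paper: it is presented explicitly as an open conjecture (Section~\ref{sec:motivation} describes it as ``still unsolved''), serving only as motivation for the lonely passenger problem. Your proposal is likewise not a proof but a research plan, and you acknowledge as much in your final paragraph. So there is no question of comparing two proofs here; the honest assessment is that your sketch contains genuine, unfilled gaps, which I name below.

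First, the telescoping reduction is already problematic. A tree in which every vertex has degree at least $3$ is infinite, so there is no finite chain $T_3 = T^{(0)}, \dots, T^{(N)} = T$ of one-edge modifications; moreover each intermediate tree $T^{(i)}$ would generally have vertices of degree below $3$ (the pendant subtrees you attach must themselves be grown), so the intermediate steps leave the class of trees for which the comparison is claimed, and it is not clear that the elementary step even holds for such intermediate trees. Second, and more fundamentally, the one-edge comparison lacks every ingredient that makes the bus argument work: the bus proof rests on exchangeability of passengers (Lemma~\ref{lem:conditioning_on_N_equals_l} and the symmetry step in Lemma~\ref{lem:nonempty_bus_dominance_3} identifying the reverse transition probability with the probability that the \emph{first} passenger travels alone), on the fact that the nonempty-bus count is a pure birth process with explicit rates, and on the monotonicity of the product $R(\underline{i})$ in Lemma~\ref{lem:first_passenger_lonely}. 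The random walk has none of these: steps are not exchangeable, the regeneration count is not monotone in time (as you note, a regeneration can be destroyed by a later retraversal), and no sufficient statistic playing the role of $\tilde{N}^{(l,n)}_\bullet$ has been identified. Your proposal correctly locates where the difficulty lies, but locating a difficulty is not the same as overcoming it; as written, the argument does not establish either inequality of the conjecture.
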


This conjecture itself is motivated by the theory of Ramanujan graphs.
A positive answer would imply that the Bernoulli graphings of
unimodular Galton-Watson trees are Ramanujan in the sense of \cite{BSzV}. (For the notion of Bernoulli graphings see e.g. \cite{Lovasz} 18.3.4. For the notion of the unimodular Galton-Watson trees see e.g. \cite{AldousLyons} Example 1.1)

\subsection{Relation to combinatorics: Stirling numbers of the second kind}
\label{sec:Stirling}
\newcommand{\stirlingii}{\genfrac{\{}{\}}{0pt}{}}

Let $\stirlingii{n}{k}$ be the Stirling number of the second kind, denoting the number of partitions of $\{1,2,\dots,n\}$ into $k$ nonempty sets. So the probability that passenger $1$ travels alone given that there are $n$ passengers in total, under the condition that there are exactly $k$ nonempty buses is $\nicefrac{\stirlingii{n-1}{k-1}}{\stirlingii{n}{k}}$. So Lemma~\ref{lem:first_passenger_lonely} says that
\begin{theorem}
\begin{equation}\label{eq:Stirling_thm}
 \frac{\stirlingii{n-1}{k-1}}{\stirlingii{n}{k}} \le
 \frac{\stirlingii{n-1}{k}}{\stirlingii{n}{k+1}}.
\end{equation}
\end{theorem}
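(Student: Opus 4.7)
The plan is to read off the theorem directly from Lemma~\ref{lem:first_passenger_lonely} by expressing the probability $\prob^\NE_{l,n}\left(B^{(l)}_1 \notin \left\{B^{(l)}_2,\dots,B^{(l)}_n\right\}\right)$ as the ratio $\nicefrac{\stirlingii{n-1}{l-1}}{\stirlingii{n}{l}}$; the inequality then drops out upon setting $l=k+1$.

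First I would spell out the combinatorial dictionary. A configuration $\underline{\omega}\in\{1,\dots,l\}^n$ lies in $\Omega^{(l,n,\NE)}$ exactly when the map $u\mapsto\omega_u$ is a surjection onto $\{1,\dots,l\}$. Such surjections are in bijection with pairs consisting of a set partition of $\{1,\dots,n\}$ into $l$ nonempty blocks (the fibres) together with a labelling of those blocks by the bus numbers $\{1,\dots,l\}$. Therefore $\left|\Omega^{(l,n,\NE)}\right|=l!\,\stirlingii{n}{l}$.

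Next I would count the configurations in the favourable event. Such a configuration is obtained by choosing the bus of passenger $1$ (one of $l$ possibilities) and then arranging passengers $2,\dots,n$ as a surjection onto the remaining $l-1$ buses. By the same counting principle, the second step contributes $(l-1)!\,\stirlingii{n-1}{l-1}$ arrangements, so the favourable count is $l!\,\stirlingii{n-1}{l-1}$. Dividing and cancelling the factors of $l!$ yields
$$\prob^\NE_{l,n}\left(B^{(l)}_1\notin\left\{B^{(l)}_2,\dots,B^{(l)}_n\right\}\right)=\frac{\stirlingii{n-1}{l-1}}{\stirlingii{n}{l}},$$
and Lemma~\ref{lem:first_passenger_lonely} with $l:=k+1$ then delivers exactly (\ref{eq:Stirling_thm}).

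There is no genuine obstacle here: the whole substance of the inequality lives in Lemma~\ref{lem:first_passenger_lonely}, and what remains is a routine surjection count. One might in principle hope to prove (\ref{eq:Stirling_thm}) directly -- for instance by manipulating the recursion $\stirlingii{n}{k}=k\,\stirlingii{n-1}{k}+\stirlingii{n-1}{k-1}$ -- but any such elementary attack would encounter the same substantive difficulty already faced by Lemma~\ref{lem:first_passenger_lonely} (namely, the stochastic monotonicity obtained via the time-reversed nonempty bus process in Lemma~\ref{lem:nonempty_bus_dominance_2}), which is presumably why the author highlights Csikvári's purely combinatorial proof as a noteworthy alternative.
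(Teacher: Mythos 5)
Your proposal is correct. The surjection counts are right: $\left|\Omega^{(l,n,\NE)}\right| = l!\,\stirlingii{n}{l}$, and the configurations in which passenger $1$ rides alone number $l!\,\stirlingii{n-1}{l-1}$, so $\prob^\NE_{l,n}\left(B^{(l)}_1\notin\left\{B^{(l)}_2,\dots,B^{(l)}_n\right\}\right)=\stirlingii{n-1}{l-1}/\stirlingii{n}{l}$, and Lemma~\ref{lem:first_passenger_lonely} with $l=k+1$ delivers (\ref{eq:Stirling_thm}); this is precisely the equivalence the paper itself states in the sentence introducing the theorem. However, the proof the paper actually attaches to the theorem is a genuinely different one, due to Csikvári: a self-contained argument that does \emph{not} pass through the probabilistic machinery. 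It applies the recursion $\stirlingii{n}{r}=\stirlingii{n-1}{r-1}+r\stirlingii{n-1}{r}$ to both denominators, reducing (\ref{eq:Stirling_thm}) to $(k+1)\stirlingii{n-1}{k-1}\stirlingii{n-1}{k+1}\le k\stirlingii{n-1}{k}^2$, and then deduces this from Newton's inequality applied to the real-rooted polynomial $\sum_k \stirlingii{n-1}{k}x^k$ (real-rootedness being Harper's theorem on the Touchard polynomials). Consequently your closing remark --- that any elementary attack would run into the same substantive difficulty as Lemma~\ref{lem:first_passenger_lonely}, namely the stochastic monotonicity obtained via the reversed chains --- is not borne out: Csikvári's route sidesteps that difficulty entirely (at the price of invoking real-rootedness, a nontrivial classical input) and in fact constitutes an independent second proof of Lemma~\ref{lem:first_passenger_lonely}. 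In summary, your version makes the theorem a corollary of the paper's coupling apparatus and requires nothing external; the paper's version buys logical independence from that apparatus, which is exactly why the author records it as a noteworthy alternative rather than as a restatement.
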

This was first pointed out by Ed Crane, and does not seem to be a known property of Stirling numbers of the second kind. However, Péter Csikvári has given a purely combinatorial proof, which I present with his permission.

\begin{proof}
The statement follows by calculation from the following facts:

\begin{enumerate}
 \item \label{fact:one_alone_or_not} $\stirlingii{n}{r}= \stirlingii{n-1}{r-1} + r \stirlingii{n-1}{r}$. This is trivial combinatorics: the number $1$ is either alone or not.
 \item \label{fact:real_rooted} $P_n(x) := \sum_{k=1}^n \stirlingii{n}{k} x^k$ is a real rooted polynomial. It is called a Touchard polynomial, exponential polynomial or Bell polynomial, see e.g. \cite{Boyadzhiev} equation (3.4) and the paragraphs after (2.15). The property was proven by induction in \cite{Harper}. It can also be found as Exercise 62.1 in \cite{Polya_Szego_II} Part five, Chapter 1.
 \item \label{fact:Newton_inequality} If $\sum_{k=1}^m a_k x^k$ is a real rooted polynomial with nonnegative coefficients, then
 \begin{equation} \label{eq:Newton_inequality}
  \frac{a_{k-1}}{\binom{m}{k-1}} \frac{a_{k+1}}{\binom{m}{k+1}} \le
  \left(\frac{a_k}{\binom{m}{k}}\right)^2.
 \end{equation}
This is Newton's inequality, see e.g. \cite{HLP} Theorem 144 (page 104) or \cite{Vondrak}, Theorem 10.2.
\end{enumerate}

Calculation: Applying fact~\ref{fact:one_alone_or_not} to the denominators in (\ref{eq:Stirling_thm}), after simplification we get
\begin{equation}
 (k+1)\stirlingii{n-1}{k-1}\stirlingii{n-1}{k+1}\le k\stirlingii{n-1}{k}^2.
\end{equation}
This follows from
\begin{equation}
 (k+1)(n-k)\stirlingii{n-1}{k-1}\stirlingii{n-1}{k+1}\le k(n-k-1)\stirlingii{n-1}{k}^2,
\end{equation}
which is exactly (\ref{eq:Newton_inequality}) with $a_k:=\stirlingii{n-1}{k}$ and $m=n-1$, so facts~\ref{fact:real_rooted} and \ref{fact:Newton_inequality} ensure that it is true.
\end{proof}



\begin{thebibliography}{1}

\bibitem{AldousLyons}
David Aldous and Russell Lyons, \emph{Processes on unimodular random networks},
  Electron. J. Probab. \textbf{12} (2007), no. 54, 1454--1508. \MR{2354165}

\bibitem{BSzV}
\'Agnes Backhausz, Bal\'azs Szegedy, and B\'alint Vir\'ag, \emph{Ramanujan
  graphings and correlation decay in local algorithms}, Random Structures
  Algorithms \textbf{47} (2015), no.~3, 424--435. \MR{3385741}

\bibitem{Boyadzhiev}
Khristo~N. Boyadzhiev, \emph{Exponential polynomials, {S}tirling numbers, and
  evaluation of some gamma integrals}, Abstr. Appl. Anal. (2009), Art. ID
  168672, 18. \MR{2545183}

\bibitem{HLP}
G.~H. Hardy, J.~E. Littlewood, and G.~P\'olya, \emph{Inequalities}, Cambridge,
  at the University Press,, 1952, 2d ed. \MR{46395}

\bibitem{Harper}
L.~H. Harper, \emph{Stirling behavior is asymptotically normal}, Ann. Math.
  Statist. \textbf{38} (1967), 410--414. \MR{211432}

\bibitem{Lovasz}
L\'aszl\'o Lov\'asz, \emph{Large networks and graph limits}, American
  Mathematical Society Colloquium Publications, vol.~60, American Mathematical
  Society, Providence, RI, 2012. \MR{3012035}

\bibitem{Polya_Szego_II}
G.~P\'olya and G.~Szeg\H o, \emph{Problems and theorems in analysis. {V}ol.
  {II}}, german ed., Springer Study Edition, Springer-Verlag, New
  York-Heidelberg, 1976, Theory of functions, zeros, polynomials, determinants,
  number theory, geometry. \MR{465631}

\bibitem{Swart} J.M. Swart, \emph{Advanced Topics in Markov chains}, Lecture notes (2018)\\
\verb+https://staff.utia.cas.cz/swart/lecture_notes/chain18_03_22b.pdf+

\bibitem{Vondrak} Jan Vondrák, \emph{lecture notes for Non-constructive methods in combinatorics, Lecture 14: Real-rooted Polynomials} (2016)\\
\verb+https://theory.stanford.edu/~jvondrak/MATH233-2016/Math233-lec14.pdf+

\end{thebibliography}


\providecommand{\bysame}{\leavevmode\hbox to3em{\hrulefill}\thinspace}
\providecommand{\MR}{\relax\ifhmode\unskip\space\fi MR }
\providecommand{\MRhref}[2]{%
  \href{http://www.ams.org/mathscinet-getitem?mr=#1}{#2}
}
\providecommand{\href}[2]{#2}

\begin{acks}
I'm grateful to many people who discussed the problem enthusiastically for months, giving many ideas. Special thanks to Márton Balázs and Ed Crane for their endurance and for their advices on the manuscript. Balázs Ráth is responsible for a few key ideas. I thank Péter Csikvári for the proof in Section~\ref{sec:Stirling}. This research was supported by NKFI grant K-142169. I dedicate this work to Edina Verebélyi for her patience and encouragement.
\end{acks}


\end{document}